\renewcommand\eqref[1]{(\ref{#1})} 
\numberwithin{equation}{section}
\theoremstyle{plain}
\newtheorem{thm}{Theorem}[section]
\newtheorem{cor}[thm]{Corollary}
\newtheorem{lem}[thm]{Lemma}
\theoremstyle{definition}
\newtheorem{defn}[thm]{Definition}
\newtheorem{rem}[thm]{Remark}
\newcommand{\Rn}{\mathbb R^{n}}
\def\R{\mathcal R}
\def\N{\mathcal N}
\def\e[#1]{{\textrm{e}}^{#1}}
\def\Rn{{\mathbb R}^n}
\def\G{{\mathbb G}}
\def\I{\mathfrak{I}}
\def\L{\mathfrak{L}}
\begin{document}

   \title[Best constants in Sobolev and Gagliardo-Nirenberg inequalities]
   {Best constants in Sobolev and Gagliardo-Nirenberg inequalities on graded groups
   and ground states for higher order nonlinear subelliptic equations}

\author[M. Ruzhansky]{Michael Ruzhansky}
\address{
  Michael Ruzhansky:
  \endgraf
  Department of Mathematics
  \endgraf
  Imperial College London
  \endgraf
  180 Queen's Gate, London SW7 2AZ
  \endgraf
  United Kingdom
  \endgraf
  {\it E-mail address} {\rm m.ruzhansky@imperial.ac.uk}
  }
\author[N. Tokmagambetov]{Niyaz Tokmagambetov}
\address{
  Niyaz Tokmagambetov:
  \endgraf
    al--Farabi Kazakh National University
  \endgraf
  71 al--Farabi ave., Almaty, 050040
  \endgraf
  Kazakhstan,
  \endgraf
   and
  \endgraf
    Department of Mathematics
  \endgraf
  Imperial College London
  \endgraf
  180 Queen's Gate, London, SW7 2AZ
  \endgraf
  United Kingdom
  \endgraf
  {\it E-mail address} {\rm n.tokmagambetov@imperial.ac.uk}
 }
\author[N. Yessirkegenov]{Nurgissa Yessirkegenov}
\address{
  Nurgissa Yessirkegenov:
  \endgraf
  Institute of Mathematics and Mathematical Modelling
  \endgraf
  125 Pushkin str., Almaty, 050010
  \endgraf
  Kazakhstan
  \endgraf
  and
  \endgraf
  Department of Mathematics
  \endgraf
  Imperial College London
  \endgraf
  180 Queen's Gate, London SW7 2AZ
  \endgraf
  United Kingdom
  \endgraf
  {\it E-mail address} {\rm n.yessirkegenov15@imperial.ac.uk}
  }

\thanks{The authors were supported in parts by the EPSRC
 grant EP/K039407/1 and by the Leverhulme Grant RPG-2014-02 as well as by the MESRK (Ministry of Education and Science of the Republic of Kazakhstan) grant 0773/GF4. No new data was collected or
generated during the course of research.}

     \keywords{Nonlinear Schr\"{o}dinger equation, Gagliardo-Nirenberg inequality, Sobolev inequality, graded Lie group, Heisenberg group, Rockland operator, stratified group, sub-Laplacian}
     \subjclass[2010]{35J35, 35G20, 22E30, 43A80}

     \begin{abstract}
In this paper the dependence of the best constants in Sobolev and Gagliardo-Nirenberg inequalities on the precise form of the Sobolev space norm is investigated. The analysis is carried out on general graded Lie groups, thus including the cases of $\mathbb R^n$, Heisenberg, and more general stratified Lie groups. The Sobolev norms may be defined in terms of Rockland operators, i.e. the hypoelliptic homogeneous left-invariant differential operators on the group. The best constants are expressed in the variational form as well as in terms of the ground state solutions of the corresponding nonlinear subelliptic equations. The orders of these equations can be high depending on the Sobolev space order in the Sobolev or Gagliardo-Nirenberg inequalities, or may be fractional. Applications are obtained also to equations with lower order terms given by different hypoelliptic operators. Already in the case of $\Rn$, the obtained results extend the classical relations by Weinstein \cite{W83} to a wide range of nonlinear elliptic equations of high orders with elliptic low order terms and a wide range of interpolation inequalities of Gagliardo-Nirenberg type. However, the proofs are different from those in \cite{W83} because of the impossibility of using the rearrangement inequalities already in the setting of the Heisenberg group. The considered class of graded groups is the most general class of nilpotent Lie groups where one can still consider hypoelliptic homogeneous invariant differential operators and the corresponding subelliptic differential equations.
     \end{abstract}
     
  \maketitle
  
  \tableofcontents

\section{Introduction}
\label{SEC:intro}

The Gagliardo-Nirenberg inequality goes back to works of Gagliardo \cite{G59} and Nirenberg \cite{N59} where it was shown that the inequality
\begin{equation}\label{GN_classic}
\int_{\Rn}|u|^{q}dx\leq C\left(\int_{\Rn}|\nabla u|^{2}dx\right)^{\frac{n(q-2)}{4}}\left(\int_{\Rn}|u|^{2}dx\right)^{\frac{2q-n(q-2)}{4}}
\end{equation}
holds for all $u\in H^{1}(\Rn)$. Here one can take
\begin{equation*}
\left\{\begin{split}
2\leq q \leq \infty \,\,\, &\hbox{for} \,\,\, n=2; \\
2\leq q \leq \frac{2n}{n-2} \,\,\, &\hbox{for} \,\,\, n\geq3.
\end{split}
\right.
\end{equation*}
Weinstein \cite{W83} obtained an expression for the best constant in the inequality \eqref{GN_classic},
relating it to the ground states (least energy solutions) of the nonlinear Schr\"{o}dinger equation
\begin{equation}\label{sem_ell1}
-\Delta u+u=|u|^{q-2}u, \;\;u\in H^{1}(\Rn).
\end{equation}
This result has numerous applications (there are 430 citations to \cite{W83} on MathSciNet), for example to further properties of the critical nonlinear Schr\"{o}dinger equation \eqref{sem_ell1}, \cite{MR04, MR05}, and to many other problems.

On the Heisenberg group $\mathbb{H}^{N}$, the subelliptic Gagliardo-Nirenberg inequality takes the form
\begin{equation}\label{GN_Heisenberg}
\int_{\mathbb{H}^{N}}|u|^{q}dx\leq C\left(\int_{\mathbb{H}^{N}}|\nabla_{H} u|^{2}dx\right)^{\frac{Q(q-2)}{4}}\left(\int_{\mathbb{H}^{N}}|u|^{2}dx\right)^{\frac{2q-Q(q-2)}{4}},
\end{equation}
where $\nabla_{H}$ is a horizontal gradient, $Q=2N+2$ is the homogeneous dimension of $ \mathbb{H}^{N}$, $2<q<2+\frac{2}{N}$.
In \cite{CR13}, the best constant for the Gagliardo-Nirenberg inequality \eqref{GN_Heisenberg} on the Heisenberg group was expressed in terms of the ground state solutions of the subelliptic equation
\begin{equation}\label{sem_ell2}
-\triangle_{H}u+u=|u|^{q-2}u, \;\;u\in H^1(\mathbb{H}^{N}),
\end{equation}
where $\triangle_{H}$ is the sub-Laplacian on $\mathbb{H}^{N}$, and $H^1(\mathbb{H}^{N})$ is the Sobolev space on $\mathbb{H}^{N}$ with the norm
$$\|u\|:=\left(\int_{\mathbb{H}^{N}}(|\nabla_{H}u|^{2}+|u|^{2})dx\right)^{1/2}.$$

One of the aims of this paper is to answer the following questions:

\begin{itemize}
\item How do the best constants in the Gagliardo-Nirenberg inequalities \eqref{GN_classic}, \eqref{GN_Heisenberg} depend on the precise formula for the Sobolev norms? For example, if we replace $\|u\|_{\dot{H}^1(\Rn)}=\|\nabla u\|_{L^2(\Rn)}$ by the equivalent norm $\|(-\Delta )^{1/2}u\|_{L^2(\Rn)}$ or by
the equivalent norms $\|((-1)^m \sum_{j=1}^n \partial_{x_j}^{2m})^{\frac1{2m}}u\|_{L^2(\Rn)}$, and similarly for the Heisenberg group, how does it influence the best constants in \eqref{GN_classic}, \eqref{GN_Heisenberg} and the nonlinear equations \eqref{sem_ell1}, \eqref{sem_ell2}?
\item What can be said about more general Gagliardo-Nirenberg inequalities? For example, when the first order Sobolev norm in \eqref{GN_classic}, \eqref{GN_Heisenberg} is replaced by higher order Sobolev norms? Also, when the $L^2$-norms on the right hand sides in \eqref{GN_classic}, \eqref{GN_Heisenberg} are replaced by appropriate $L^p$-norm for other values of $p$?
\item What about nonlinear equations composed of several (hypo)elliptic terms, for example,
\begin{equation}\label{sem_ell2h}
\sum_{j=1}^n\frac{\partial^4 u}{\partial x_j^4} -\triangle u+u=|u|^{q-2}u, \;\;u\in H^2(\mathbb{R}^{n}),
\end{equation}
instead of \eqref{sem_ell1} or, for example,
\begin{equation}\label{sem_ell2h2}
\sum_{j=1}^N(X_j^8+Y_j^8)u +\triangle_{H}^2u-\triangle_Hu+u=|u|^{q-2}u, \;\;u\in H^4(\mathbb{H}^{N}),
\end{equation}
instead of \eqref{sem_ell2}, where the vectors $\{X_j,Y_j\}_{j=1}^N$ give the first stratum of $\mathbb{H}^{N}$. What can be said about ground state solutions for such equations?
\item
{\em From this perspective, our results will be new already in the classical Euclidean setting of $\Rn$,} for example also allowing one to consider differential equations on $\Rn$ of the form
\begin{equation}\label{EQ:hoex0}
\sum_{j=1}^\ell \left( (-1)^{m_j}\sum_{k=1}^n a_{jk}\frac{\partial^{2m_j} u}{\partial x_k^{2m_j}}\right)=|u|^{q-2}u, \quad a_{jk}>0,\; m_j\in\mathbb N_0,
\end{equation}
proving the existence of ground state (least energy) solutions for such equations.
\end{itemize}

A natural setting for our analysis will be that of graded Lie groups as developed by Folland and Stein \cite{FS-book}. This is the largest class of homogeneous nilpotent Lie groups admitting homogeneous hypoelliptic left-invariant differential operators (\cite{Miller:80}, \cite{tER:97}, see also a discussion in \cite[Section 4.1]{FR16}).
These operators are called Rockland operator, after Helffer and Nourrigat's resolution \cite{HN-79} of the Rockland conjecture. Thus, our setting will include the higher order operators on $\Rn$ as well as higher order hypoelliptic invariant differential operators on the Heisenberg group, on general stratified groups, and on general graded Lie groups. We also note that the Rockland operators on graded Lie groups appear naturally in the analysis of subelliptic operators on manifolds, starting with the seminal paper of Rothschild and Stein \cite{Rothschild-Stein:AM-1976}.

Thus, let $\G$ be a graded Lie group, i.e. a connected simply connected Lie group such that its Lie algebra $\mathfrak{g}$ admits a decomposition
$$\mathfrak{g}=\bigoplus_{\ell=1}^{\infty}\mathfrak{g}_{\ell},$$
where the $\mathfrak{g}_{\ell}$, $\ell=1,2,...,$ are vector subspaces of $\mathfrak{g}$, all but finitely many equal to $\{0\}$, and satisfying
$$[\mathfrak{g}_{\ell},\mathfrak{g}_{\ell'}]\subset \mathfrak{g}_{\ell+\ell'} \;\;\forall \ell, \ell'\in \mathbb{N}.$$
Such groups are then necessarily nilpotent and homogeneous, they can be identified with $\Rn$ through the exponential mapping, with $n$ being the topological dimension of $\G$, with the Haar measure on $\G$ given by the Lebesgue measure on $\Rn$ through this identification.
A family of dilations of a Lie algebra $\mathfrak{g}$ is a family of linear mappings of the form
$$D_{r}={\rm Exp}(A \,{\rm ln}r)=\sum_{k=0}^{\infty}
\frac{1}{k!}({\rm ln}(r) A)^{k},$$
where $A$ is a diagonalisable linear operator on the Lie algebra $\mathfrak{g}$ with positive eigenvalues, and each $D_{r}$ is a morphism of $\mathfrak{g}$,
that is, a linear mapping from $\mathfrak{g}$ to itself satisfying
$$\forall X,Y\in \mathfrak{g},\, r>0,\;
[D_{r}X, D_{r}Y]=D_{r}[X,Y],$$
where $[X,Y]:=XY-YX$ is the Lie bracket. If $\nu_{1},\ldots,\nu_{n}$ are weights of the dilations, i.e. the eigenvalues of the matrix $A$, then the group's dilations are defined through the exponential mapping by
$$D_{r}(x)=rx:=(r^{\nu_{1}}x_{1},\ldots,r^{\nu_{n}}x_{n}), \;\;x=(x_{1},\ldots,x_{n})\in\mathbb{G},\;\;r>0.$$
The homogeneous dimension of $\G$ is defined by
$$
Q:={\rm Tr}\, A=\nu_1+\cdots+\nu_n.
$$
In Section \ref{SEC:prelim} we will give a short overview of graded Lie groups, but we can mention here that we have $\Rn$ as a special case with all $\nu_j=1$, or the stratified groups when $\mathfrak g$ is generated by its first stratum $\mathfrak g_1$ through the iterative application of commutators.

Let $\R$ be a {\em positive Rockland operator on $\G$, i.e. a homogeneous hypoelliptic left-invariant differential operator, positive in the operator sense}. Such operators always exist. For example, for the Heisenberg group, the sub-Laplacian and its powers are Rockland operators.
If $\G$ is a stratified Lie group with a basis  $X_1,\ldots,X_k\in\mathfrak g_1$ of $\mathfrak g_1$, the operators
$$
\R=(-1)^m\sum_{j=1}^k a_j  X_j^{2m},\quad
a_j>0,
$$
are positive Rockland operators for any $m\in\mathbb N$, yielding the sub-Laplacian for $m=1$.
More generally, for any graded Lie group $\G\sim\Rn$ with dilation weights $\nu_1,\ldots,\nu_n$ and a basis $X_1,\ldots,X_n$ of the Lie algebra $\mathfrak g$ of $\G$ satisfying
$$
D_r X_j=r^{\nu_j} X_j,\quad j=1,\ldots,n,\; r>0,
$$
the operator
$$
\R=\sum_{j=1}^n (-1)^{\frac{\nu_0}{\nu_j}} a_j X_j^{2\frac{\nu_0}{\nu_j}},\quad a_j>0,
$$
is a Rockland operator of homogeneous degree $2\nu_0$, if $\nu_0$ is any common multiple of $\nu_1,\ldots,\nu_n$. There are other examples of Rockland operators that can be adapted to special selections of vector fields generating the Lie algebra in special ways, such as for example the vector fields from the first stratum on the stratified Lie groups. We refer to \cite[Section 4.1.2]{FR16} for other examples and a detailed discussion of Rockland operators.

Sobolev spaces associated to positive Rockland operators on graded Lie groups have been analysed in \cite{FR:Sobolev} and in \cite[Section 4.4]{FR16}. In particular, for a positive Rockland operator of homogeneous degree $\nu\in\mathbb N$, for $a>0$ and $1\leq p<\infty$, we can define the homogeneous and inhomogeneous Sobolev spaces, respectively, by the norms
\begin{equation}\label{EQ:Sob-norms}
\|u\|_{\dot{L}^p_{a,\R}(\G)}:=\|\R^{\frac{a}\nu}u\|_{L^p(\G)} \;\textrm{ and }\;
\|u\|_{{L}^p_{a,\R}(\G)}:=(\|\R^{\frac{a}\nu}u\|_{L^p(\G)}^p+\|u\|_{L^p(\G)}^p)^{\frac1p}.
\end{equation}
If $\G=\Rn$ and $\R$ is a homogeneous elliptic operator with constant coefficients, these spaces coincide with the usual Sobolev spaces on $\Rn$. If $\G$ is a stratified Lie group and $\R$ is a positive sub-Laplacian, then these Sobolev spaces have been analysed by Folland in \cite{F75}.
For an extensive analysis of these Sobolev spaces on general graded Lie group we refer to \cite{FR:Sobolev} or \cite[Section 4.4]{FR16}. In particular, it was shown that these spaces are independent of the choice of a positive operator $\R$, so that we can drop the subscript $\R$ in their notation, abbreviating them to $\dot{L}^p_a$ and ${L}^p_a$, respectively.
While these spaces are independent of $\R$, the particular norms in \eqref{EQ:Sob-norms} do depend on it.

The starting point of our analysis in this paper is the following Gagliardo-Nirenberg-Sobolev inequality on graded groups:

\begin{itemize}
\item {\bf (Gagliardo-Nirenberg inequality)}
Let $\mathbb{G}$ be a graded Lie group of homogeneous dimension $Q$ and let $\mathcal{R}_{1}$ and $\mathcal{R}_{2}$ be positive Rockland operators of homogeneous degrees $\nu_{1}$ and $\nu_{2}$, respectively. Let $a_{1}> a_{2}\geq0$, $1<p<\frac{Q}{a_{1}}$ and $\frac{pQ}{Q-a_{2}p}\leq q\leq\frac{pQ}{Q-a_{1}p}$. Then there exists a constant $C>0$ such that
\begin{equation}\label{EQ:GN1}
\int_{\mathbb{G}}|u(x)|^{q}dx\leq C \left(\int_{\mathbb{G}}|\mathcal{R}_{1}^{\frac{a_{1}}{\nu_{1}}}u(x)|^{p}dx\right)^{\frac{Q(q-p)-a_{2}pq}{(a_{1}-a_{2})p^{2}}}
\left(\int_{\mathbb{G}}|\mathcal{R}_{2}^{\frac{a_{2}}{\nu_{2}}}u(x)|^{p}dx\right)^{\frac{a_{1}pq-Q(q-p)}{(a_{1}-a_{2})p^{2}}}
\end{equation}
holds for all $u\in \dot{L}^{p}_{a_{1}}(\mathbb{G})\cap \dot{L}^{p}_{a_{2}}(\mathbb{G})$.
\end{itemize}

The freedom of working with two different Rockland operators $\R_1, \R_2$ in \eqref{EQ:GN1} leads to the possibility of considering two different (hypo)elliptic operators in equations
\eqref{sem_ell2h} and \eqref{sem_ell2h2}. Throughout this paper we will often abbreviate the notation by writing
$$
L^p_{a_1,a_2}(\mathbb{G}):=\dot{L}^{p}_{a_{1}}(\mathbb{G})\cap \dot{L}^{p}_{a_{2}}(\mathbb{G}).
$$
The inequality \eqref{EQ:GN1} will be established in Section \ref{SEC:GN_ineq}.
Consequently, the question arises of what is the best constant $C$ in this inequality, which we may denote by $C_{GN,\R_{1},\R_{2}}=
C_{GN,\R_{1},\R_{2},a_{1},a_{2},p,q}$ since it depends on the operators $\R_{1}, \R_{2}$ as well as on the indices $a_{1},a_{2},p,q$. The related question is of the best constant in the Sobolev (embedding) inequality
\begin{equation}\label{Sobolev1}
\left(\int_{\G}|u(x)|^{q}dx\right)^{\frac{p}{q}}\leq C\int_{\G}(|\mathcal{R}^{\frac{a}{\nu}}u(x)|^{p}+|u(x)|^{p})dx,
\end{equation}
where $u\in L^{p}_{a}(\mathbb{G})$, see \cite[Theorem 4.4.28]{FR16} for its proof in the setting of general graded Lie groups.

In this paper we will show that both the Sobolev inequality \eqref{Sobolev1} and the Gagliardo-Nirenberg inequality \eqref{EQ:GN1} are related to the
following Schr\"{o}dinger equation with the power nonlinearities:
\begin{equation}\label{nonlinear1}
\mathcal{R}_{1}^{\frac{a_{1}}{\nu_{1}}}(|\mathcal{R}_{1}^{\frac{a_{1}}{\nu_{1}}}u|^{p-2}\mathcal{R}_{1}^{\frac{a_{1}}{\nu_{1}}}u)+
\mathcal{R}_{2}^{\frac{a_{2}}{\nu_{2}}}(|\mathcal{R}_{2}^{\frac{a_{2}}{\nu_{2}}}u|^{p-2}\mathcal{R}_{2}^{\frac{a_{2}}{\nu_{2}}}u)=|u|^{q-2}u.
\end{equation}
Examples of such equation are given by equations \eqref{sem_ell2h} and \eqref{sem_ell2h2}.
Moreover, these inequalities are related to the variational problem
\begin{equation}\label{di}
d=\inf_{\stackrel{u\in L^{p}_{a_{1},a_{2}}(\mathbb{G})\backslash\{0\}}{\I(u)=0}} \L(u),
\end{equation}
for functionals
\begin{equation}\label{Li}
\mathfrak{L}(u)=\frac{1}{p}\int\limits_{\mathbb{G}}|\mathcal{R}_{1}^{\frac{a_{1}}{\nu_{1}}}u(x)|^{p}dx+
\frac{1}{p}\int\limits_{\mathbb{G}}|\mathcal{R}_{2}^{\frac{a_{2}}{\nu_{2}}}u(x)|^{p}dx
-\frac{1}{q}\int\limits_{\mathbb{G}}|u(x)|^{q}dx
\end{equation}
and
\begin{equation}\label{Ii}
\mathfrak{I}(u):=\int\limits_{\mathbb{G}}(|\mathcal{R}_{1}^{\frac{a_{1}}{\nu_{1}}}u(x)|^{p}+
|\mathcal{R}_{2}^{\frac{a_{2}}{\nu_{2}}}u(x)|^{p}-|u(x)|^{q})dx.
\end{equation}

Thus, in this paper we will show that
\begin{itemize}
\item {\bf (Existence of ground state solutions)} Let $a_{1}> a_{2}\geq0$, $1<p<\frac{Q}{a_{1}}$ and $\frac{pQ}{Q-a_{2}p}<q<\frac{pQ}{Q-a_{1}p}$. Then the nonlinear Schr\"{o}dinger type equation \eqref{nonlinear1} has a least energy solution $\phi\in
\dot{L}^{p}_{a_{1}}(\mathbb{G})\cap \dot{L}^{p}_{a_{2}}(\mathbb{G})$, i.e. a solution $\phi$ such that $d=\L(\phi)$.

\item {\bf (Best constants in Sobolev embeddings)}
Let $a>0$, $1<p<\frac{Q}{a}$, $p<q<\frac{pQ}{Q-ap}$, let $\phi$ be a least energy solution of \eqref{nonlinear1}, and  let $d=\L(\phi)$.
Let $C_{S,\R, a,p,q}$ be the best constant in the Sobolev embedding theorem, i.e. the smallest constant $C$ in the inequality \eqref{Sobolev1}.
Then we have
\begin{equation}\label{EQ:Si}
C_{S, \R,a,p,q}=\left(\frac{apq}{apq-Q(q-p)}\int_{\G}|\phi(x)|^{p}dx\right)^{\frac{p-q}{q}}=
\left(\frac{pq}{q-p}d\right)^{\frac{p-q}{q}}.
\end{equation}

\item {\bf (Best constants in Gagliardo-Nirenberg inequalities)}
Let $a_{1}> a_{2}\geq0$, $1<p<\frac{Q}{a_{1}}$, $\frac{pQ}{Q-a_{2}p}<q<\frac{pQ}{Q-a_{1}p}$, let $\phi$ be a least energy solution of \eqref{nonlinear1}, and  let $d=\L(\phi)$.
Let $C_{GN,\R_{1},\R_{2},a_{1},a_{2},p,q}$ be the best constant in the Gagliardo-Nirenberg inequality, i.e. the smallest constant $C$ in the inequality \eqref{GN1}. Then we have
$$C_{GN,\R_{1},\R_{2},a_{1},a_{2},p,q}$$
\begin{equation}\label{EQ:GNi}=\frac{(a_{1}-a_{2})pq}{a_{1}pq-Q(q-p)}
\left(\frac{Q(q-p)-a_{2}pq}{a_{1}pq-Q(q-p)}\right)^{\frac{a_{2}pq-Q(q-p)}{(a_{1}-a_{2})p^{2}}}
\|\mathcal{R}_{2}^{\frac{a_{2}}{\nu_{2}}}\phi\|_{L^{p}(\mathbb{G})}^{p-q}
\end{equation}
$$ =
\frac{(a_{1}-a_{2})pq}{a_{1}pq-Q(q-p)}
\left(\frac{Q(q-p)-a_{2}pq}{a_{1}pq-Q(q-p)}\right)^{\frac{a_{2}pq-Q(q-p)}{(a_{1}-a_{2})p^{2}}}
\left(\frac{a_{1}pq-Q(q-p)}{(a_{1}-a_{2})(q-p)}d\right)^{\frac{p-q}{p}}.
$$
\end{itemize}
The obtained results provide new insights already in the case of $\G=\Rn$ in view of the arbitrariness of the operators $\R$ and $\R_{1}$, $\R_{2}$, which in this case may be any homogeneous elliptic differential operator with constant coefficients, and of any order. Moreover, in this case the proof works equally well if it is a pseudo-differential operator. We note that on $\Rn$ for some indices explicit expressions for best constants in Sobolev inequalities are available, see e.g. \cite{Aubin76, Talenti76}, and also \cite{LL}.

Let us give some  new examples but, more interestingly, in the setting of the Heisenberg group $\mathbb H^N$ of homogeneous dimension $Q=2N+2$. Taking $\R_{1}=\R_{2}=-\Delta_H$ the sub-Laplacian on $\mathbb H^N$, $\nu=2$, for $p=2$ and $a_{1},a_{2},m_{1}\in\mathbb N$, $m_{2}\in\mathbb N\cup\{0\}$ with $a_{1}=m_{1}>a_{2}=m_{2}$, the equation \eqref{nonlinear1} becomes a nonlinear differential equation for the poly-sub-Laplacian
\begin{equation}\label{nonlinear1ex}
(-\Delta_H)^{m_{1}}u+(-\Delta_H)^{m_{2}}u=|u|^{q-2}u.
\end{equation}
Thus, it follows from the results of this paper that for $Q>2m_{1}$ and $\frac{2Q}{Q-m_{2}p}< q <\frac{2Q}{Q-m_{1}p}$, this
equation has a ground state $\phi\in L^2_{m_{1},m_{2}}$ minimising the variational problem \eqref{di},
such that $d=\L(\phi)$ enters the expressions \eqref{EQ:Si} and \eqref{EQ:GNi} for the best constants in the Sobolev inequality
$$
\|u\|_{L^{q}(\G)}^2\leq C(\|(-\Delta_H)^{\frac{m_{1}}{2}}u\|_{L^{2}(\G)}^2+\|(-\Delta_H)^{\frac{m_{2}}{2}}u\|_{L^{2}(\G)}^2)
$$
and in the Gagliardo-Nirenberg inequality
\begin{multline}\label{EQ:GN1i}
\int_{\mathbb{G}}|u(x)|^{q}dx\leq \\
C
\left(\int_{\G}|(-\Delta_H)^{\frac{m_{1}}{2}}u(x)|^{2}dx\right)^{\frac{Q(q-2)-2m_{2}q}{4(m_{1}-m_{2})}}
\left(\int_{\G}|(-\Delta_H)^{\frac{m_{2}}{2}}u(x)|^{2}dx\right)^{\frac{2m_{1}q-Q(q-2)}{4(m_{1}-m_{2})}},
\end{multline}
respectively. The same is true if we replace the Heisenberg group by any stratified group, or if we replace it by $\Rn$ also replacing $\Delta_H$ by the Laplacian and $Q$ by $n$.

Some boundary value problems for the poly-sub-Laplacians $(-\Delta_H)^{m}$ have been studied in \cite{RS-PAMS} on the Heisenberg group, and in \cite{RS-AM} on general stratified groups.

The possibility of using different Rockland operators at the same time leads to a wider variety of equations where our results are applicable. For example, let $\{X_j,Y_j\}_{j=1}^N$ be the basis of the first stratum of a stratified Lie group $\mathbb G$. So, this is the case of the Heisenberg group $\mathbb H^N$, or of $\mathbb G=\mathbb R^n$ with $N=n$.
Let us now take
$$\R_{1}=(-1)^{m_1}\sum_{j=1}^N (X_j^{2m_1}+Y_j^{2m_1}),\quad \R_{2}=(-1)^{m_2} \sum_{j=1}^N \alpha_j(X_j^{2m_2}+Y_j^{2m_2})
$$
for some $\alpha_j>0$, $m_1,m_2\in\mathbb N_0$, $m_2>m_1$, so that $\nu_1=2m_1$, $\nu_2=2m_2$.
For $p=2$ the equation \eqref{nonlinear1} becomes a nonlinear differential equation \begin{equation}\label{nonlinear1ex2}
(-1)^{m_1}\sum_{j=1}^N (X_j^{2m_1}+Y_j^{2m_1}) u+(-1)^{m_2}\sum_{j=1}^N \alpha_j(X_j^{2m_2}+Y_j^{2m_2})u=|u|^{q-2}u.
\end{equation}
Thus, it follows from the results of this paper that for $Q>2m_{1}$ and $\frac{2Q}{Q-m_{2}p}< q <\frac{2Q}{Q-m_{1}p}$, this
equation has a ground state $\phi\in L^2_{m_{1},m_{2}}$ minimising the variational problem \eqref{di},
such that $d=\L(\phi)$ enters the expressions \eqref{EQ:Si} and \eqref{EQ:GNi} for the best constants in the Sobolev inequality
$$
\|u\|_{L^{q}(\G)}^2\leq C(\|\sum_{j=1}^N (X_j^{2m_1}+Y_j^{2m_1})u\|_{L^{2}(\G)}^2+\|\sum_{j=1}^N \alpha_j (X_j^{2m_2}+Y_j^{2m_2})u\|_{L^{2}(\G)}^2)
$$
and in the Gagliardo-Nirenberg inequality
\begin{multline}\label{EQ:GN1i2}
\int_{\mathbb{G}}|u(x)|^{q}dx\leq
C
\left(\int_{\G}|\sum_{j=1}^N (X_j^{2m_1}+Y_j^{2m_1}) u(x)|^{2}dx\right)^{\frac{Q(q-2)-2m_{2}q}{4(m_{1}-m_{2})}} \\
\times
\left(\int_{\G}|\sum_{j=1}^N \alpha_j(X_j^{2m_2}+Y_j^{2m_2}) u(x)|^{2}dx\right)^{\frac{2m_{1}q-Q(q-2)}{4(m_{1}-m_{2})}},
\end{multline}
respectively.

Weinstein \cite{W83} gave the proof of these results in the case $\G=\Rn$, $\R=\R_{1}$ corresponding to the gradient, $p=2$ and $a_{1}=1$, $a_{2}=0$, by finding the solution to the minimisation problem \eqref{di}. However, his techniques rely on the rearrangement inequalities which are, therefore, specific to $\Rn$ in the setting of general nilpotent Lie groups. In the case of $\G$ being the Heisenberg group, $\R=\R_{1}$ corresponding to the horizontal gradient, $p=2$ and $a_{1}=1$, $a_{2}=0$, these results have been obtained in \cite{CR13} by a different method relying on obtaining upper and lower estimates on the best constants in the Gagliardo-Nirenberg inequality. This method was effective in several other problems, for example in weighted nonlinear equations \cite{CR-Opuscula}.
In this paper we will also use this method but now extending it to larger ranges of indices and to general graded Lie groups.

Similar results have been investigated for Riemannian manifolds and hyperbolic spaces, see for example \cite{MS13} and \cite{HV95}.

The paper is structured as follows. In Section \ref{SEC:prelim} we briefly recall further main concepts of graded Lie groups and fix the notation.
The Gagliardo-Nirenberg inequality on graded Lie group is established in Section \ref{SEC:GN_ineq}. In Section \ref{SEC:exist} we show the existence of least energy solutions of subelliptic equation \eqref{nonlinear1}. The expressions of the best constant in the Gagliardo-Nirenberg and Sobolev inequalities are obtained in Section \ref{SEC:sharp} and in Section \ref{SEC:sharp2}, respectively. Finally, in Section \ref{SEC:ex_exist} we prove the more general form of the Gagliardo-Nirenberg inequality and present the extension of the main result of the Section \ref{SEC:exist}.

\section{Preliminaries}
\label{SEC:prelim}

In this section we very briefly recall the necessary notation concerning the setting of graded groups. For a detailed
description of the notions of graded and homogeneous nilpotent Lie groups we refer to Folland and Stein \cite[Chapter 1]{FS-book}, or to the recent exposition in \cite[Chapter 3]{FR16}.

A connected simply connected Lie group $\mathbb{G}$ is called a graded Lie group if its Lie algebra $\mathfrak{g}$ admits a gradation
$$\mathfrak{g}=\bigoplus_{\ell=1}^{\infty}\mathfrak{g}_{\ell},$$
where the $\mathfrak{g}_{\ell}$, $\ell=1,2,...,$ are vector subspaces of $\mathfrak{g}$, all but finitely many equal to $\{0\}$, and satisfying
$$[\mathfrak{g}_{\ell},\mathfrak{g}_{\ell'}]\subset \mathfrak{g}_{\ell+\ell'} \;\;\forall \ell, \ell'\in \mathbb{N}.$$

Let us now fix a basis $\{X_{1},\ldots,X_{n}\}$ of a Lie algebra $\mathfrak{g}$ adapted to the gradation. We obtain points in $\mathbb{G}$ by the exponential mapping $\exp_{\mathbb{G}}:\mathfrak{g}\rightarrow\mathbb{G}$ as
$$x=\exp_{\mathbb{G}}(x_{1}X_{1}+\ldots+x_{n}X_{n}).$$
A family of dilations of a Lie algebra $\mathfrak{g}$ is a family of linear mappings of the following form
$$D_{r}={\rm Exp}(A \,{\rm ln}r)=\sum_{k=0}^{\infty}
\frac{1}{k!}({\rm ln}(r) A)^{k},$$
where $A$ is a diagonalisable linear operator on the Lie algebra $\mathfrak{g}$ with positive eigenvalues, and each $D_{r}$ is a morphism of $\mathfrak{g}$,
that is, a linear mapping from $\mathfrak{g}$ to itself satisfying:
$$\forall X,Y\in \mathfrak{g},\, r>0,\;
[D_{r}X, D_{r}Y]=D_{r}[X,Y],$$
where $[X,Y]:=XY-YX$ is the Lie bracket. The dilations can be extended through the exponential mapping to the group $\G$ by
$$D_{r}(x)=rx:=(r^{\nu_{1}}x_{1},\ldots,r^{\nu_{n}}x_{n}), \;\;x=(x_{1},\ldots,x_{n})\in\mathbb{G},\;\;r>0,$$
where $\nu_{1},\ldots,\nu_{n}$ are weights of the dilations. The homogeneous dimension of $\G$ is defined by
$$
Q:={\rm Tr}\, A=\nu_1+\cdots+\nu_n.
$$

We note that the standard Lebesgue measure $dx$ on $\mathbb{R}^{n}$ is the Haar measure for $\mathbb{G}$ (see, e.g. \cite[Proposition 1.6.6]{FR16}). We also recall that a {homogeneous quasi-norm} on $\mathbb G$ is
a continuous non-negative function
$$\mathbb{G}\ni x\mapsto |x|\in [0,\infty)$$
which satisfies the following properties
\begin{itemize}
\item   $|x^{-1}| = |x|$ for all $x\in \mathbb{G}$,
\item  $|\lambda x|=\lambda |x|$ for all
$x\in \mathbb{G}$ and $\lambda >0$,
\item  $|x|= 0$ if and only if $x=0$.
\end{itemize}

The quasi-ball centred at $x\in\mathbb{G}$ with radius $R > 0$ is defined by
$$B(x,R):=\{y\in \mathbb{G}: |x^{-1}y|<R\}.$$

Let $\widehat{\mathbb{G}}$ denote the unitary dual of $\mathbb{G}$.
For a representation $\pi\in\widehat{\mathbb{G}}$, let $\mathcal{H}_{\pi}^{\infty}$ denote the space of smooth vectors for it. Then a Rockland operator $\mathcal{R}$ on $\mathbb{G}$ is a left-invariant differential operator which is homogeneous of positive degree and satisfies the Rockland condition:

({\bf R}) for every representation $\pi\in\widehat{\mathbb{G}}$, except for the trivial representation, the operator $\pi(\R)$ is injective on $\mathcal{H}_{\pi}^{\infty}$, that is,
$$\forall \upsilon \in \mathcal{H}_{\pi}^{\infty}, \;\;\pi(\R)\upsilon=0\Rightarrow \upsilon=0,$$
where $\pi(\R):=d\pi(\R)$ is the infinitesimal representation of $\R$ as of an element of the universal enveloping algebra of $\G$.

We refer to \cite[Definition 1.7.4 and Section 4.1.1]{FR16} for a detailed discussion of this definition, that appeared in the work of Rockland \cite{Rockland}. Different characterisations of such operators have been obtained by Rockland \cite{Rockland} and Beals \cite{Beals-Rockland}, until the resolution in \cite{HN-79} by Helffer and Nourrigat of the so-called Rockland conjecture, which characterised operators satisfying condition ({\bf R}) as left-invariant homogeneous hypoelliptic differential operators on $\G$.

In this paper we will not be using the representation theoretic interpretation of these operators, so we define
{\em Rockland operators as left-invariant homogeneous hypoelliptic differential operators on $\G$.}

Moreover, in this paper we will deal with the Rockland differential operators which are positive in the sense of operators.

We refer to \cite[Chapter 4]{FR16} for an extensive presentation concerning Rockland operators and their properties, as well as for the consistent development of the corresponding theory of Sobolev spaces. The corresponding Besov spaces on graded Lie groups and their properties appeared in \cite{CR-CRAS}. A different version of the Gagliardo-Nirenberg inequality on graded Lie groups appeared in \cite{BFKG-graded}.
The pseudo-differential calculus on graded Lie groups appeared in \cite{FR:graded} and then in \cite{FR16}. Spectral properties of the infinitesimal representations of Rockland operators have been analysed in \cite{tER:97}.

\section{Gagliardo-Nirenberg inequality on graded Lie group}
\label{SEC:GN_ineq}

Let $\mathbb{G}$ be a graded Lie group of homogeneous dimension $Q$ and let $\mathcal{R}_{1}$ and $\mathcal{R}_{2}$ be positive Rockland operators of homogeneous degree $\nu_{1}$ and $\nu_{2}$, respectively.

In this section we investigate the Gagliardo-Nirenberg inequality on graded Lie groups. We denote the Sobolev space by $L^{p}_{a}(\mathbb{G})=L^{p}_{a,\R}(\mathbb{G})$, for $a>0$, defined by the norm
\begin{equation}\label{norm_0}
\|u\|_{L^{p}_{a, \R}(\mathbb{G})}:=\left(\int_{\mathbb{G}}(|\mathcal{R}^{\frac{a}{\nu}}u(x)|^{p}+|u(x)|^{p})dx\right)^{1/p}.
\end{equation}
Let us denote
$$\|u\|_{\dot{L}_{a,\R}^{p}(\G)}:=\|\mathcal{R}^{a/\nu}u\|_{L^{p}(\G)}.$$
We also use the space $L^{p}_{a_{1},a_{2}}(\mathbb{G})=L^{p}_{a_{1}, a_{2}, \mathcal{R}_{1}, \R_{2}}(\mathbb{G})$, for $a_{1}>a_{2}\geq0$, defined by the norm
\begin{equation}\label{norm}
\|u\|_{L^{p}_{a_{1}, a_{2}, \R_{1}, \R_{2}}(\mathbb{G})}:=\left(\int_{\mathbb{G}}(|\mathcal{R}_{1}^{\frac{a_{1}}{\nu_{1}}}u(x)|^{p}+
|\mathcal{R}_{2}^{\frac{a_{2}}{\nu_{2}}}u(x)|^{p})dx\right)^{1/p}.
\end{equation}
\begin{rem}\label{rem_ind}
We refer to \cite[Theorem 4.4.20]{FR16} for the independence of the spaces $L^{p}_{a}(\mathbb{G})$ of a particular choice of the Rockland operator $\R$. Consequently, the spaces $L^{p}_{a_{1}, a_{2}}(\G)=\dot{L}^{p}_{a_{1}}(\mathbb{G})\cap \dot{L}^{p}_{a_{2}}(\mathbb{G})$ are also independent of the choice of Rockland operators $\R_{1}$ and $\R_{2}$. Note that if $a_{2}=0$, then $L^{p}_{a_{1},0}=L^{p}_{a_{1}}(\G)$. Let us show this independence and another relation also for $a_{1}\neq a_{2}$. We know that
$$\|\mathcal{R}_{2}^{\frac{a_{2}}{\nu_{2}}}f\|_{L^{p}(\G)}<\infty \Longleftrightarrow \|\mathcal{R}_{1}^{\frac{a_{2}}{\nu_{1}}}f\|_{L^{p}(\G)}<\infty.$$
Denoting $\mathcal{R}_{1}^{\frac{a_{2}}{\nu_{1}}}f=g$, we get
$$\|\mathcal{R}_{1}^{\frac{a_{1}}{\nu_{1}}}f\|_{L^{p}(\G)}=\|\mathcal{R}_{1}^{\frac{a_{1}-a_{2}}{\nu_{1}}}g\|_{L^{p}(\G)}<\infty,$$
which gives $g\in \dot{L}^{p}_{a_{1}-a_{2}}(\G)$. It follows that $f\in\mathcal{R}_{2}^{-\frac{a_{2}}{\nu_{1}}}(\dot{L}^{p}_{a_{1}-a_{2}})$. Since the homogeneous degree of the Rockland operator $\mathcal{R}_{2}^{-\frac{a_{2}}{\nu_{1}}}$ is  $0$, using \cite[Theorem 4.4.18]{FR16} we see again the independence of the spaces $L^{p}_{a_{1}, a_{2}}(\G)$ from the choice of Rockland operators $\R_{1}$ and $\R_{2}$.

Thus, we can omit the subscripts $\R$ or $\R_{1}$, $\R_{2}$ in the notation for these spaces. However, we may sometimes still write these to emphasise the particular norm that we use on these spaces.
\end{rem}

In \cite{RT-Heisenberg}, it was shown that if
$$
a>0,\; 1<r<\frac{Q}{a}\; \textrm{ and }\; 1\leq p\leq q\leq \frac{rQ}{Q-ar},
$$
then we have the following Gagliardo-Nirenberg type inequality
\begin{equation}\label{EQ:RT-GN}
\|u\|_{L^{q}(\G)}\lesssim \|u\|_{\dot{L}_{a}^{r}(\G)}^{s} \|u\|_{L^{p}(\G)}^{1-s}\simeq
 \|\R^{a/\nu}u\|_{L^{r}(\G)}^{s} \|u\|_{L^{p}(\G)}^{1-s},
\end{equation}
for $s=\left(\frac1p-\frac1q\right) \left(\frac{a}Q+\frac1p-\frac1r\right)^{-1}\in [0,1]$.

Such inequality was used in \cite{RT-Heisenberg} in the analysis of damped wave equations for Rockland operators on graded Lie groups, using the nonharmonic analysis pseudo-differential techniques
\cite{RT-IMRN}.
For completeness and also to fix the notation and the relation to Sobolev inequalities, we now give a simple proof of this result for the case $p=r$ relevant to our considerations.

\begin{thm}\label{THM-GN}
Let $\mathbb{G}$ be a graded Lie group of homogeneous dimension $Q$ and let $\mathcal{R}_{1}$ and $\mathcal{R}_{2}$ be positive Rockland operators of homogeneous degrees $\nu_{1}$ and $\nu_{2}$, respectively. Let $a_{1}> a_{2}\geq0$, $1<p<\frac{Q}{a_{1}}$ and $\frac{pQ}{Q-a_{2}p}\leq q\leq\frac{pQ}{Q-a_{1}p}$. Then there exists a positive constant $C$ such that
\begin{equation}\label{GN1}
\int_{\mathbb{G}}|u(x)|^{q}dx\leq C \left(\int_{\mathbb{G}}|\mathcal{R}_{1}^{\frac{a_{1}}{\nu_{1}}}u(x)|^{p}dx\right)^{\frac{Q(q-p)-a_{2}pq}{(a_{1}-a_{2})p^{2}}}
\left(\int_{\mathbb{G}}|\mathcal{R}_{2}^{\frac{a_{2}}{\nu_{2}}}u(x)|^{p}dx\right)^{\frac{a_{1}pq-Q(q-p)}{(a_{1}-a_{2})p^{2}}}
\end{equation}
holds for all $u\in \dot{L}^{p}_{a_{1}}(\mathbb{G})\cap \dot{L}^{p}_{a_{2}}(\mathbb{G})$.
\end{thm}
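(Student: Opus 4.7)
The plan is to reduce \eqref{GN1} to the standard Sobolev embeddings for $\R_{1}$ and $\R_{2}$ combined with a H\"older interpolation in Lebesgue spaces. The deep analytic input --- the $L^{p}\to L^{p^{*}}$ estimate for fractional powers of Rockland operators --- is already available from \cite[Theorem 4.4.28]{FR16} (in its equivalent homogeneous form), so what remains is a short and essentially algebraic argument.

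First, I would invoke the homogeneous Sobolev embedding: for any positive Rockland operator $\R$ of degree $\nu$ and any $b>0$ with $1<p<Q/b$, one has
\begin{equation*}
\|u\|_{L^{q_{b}}(\G)}\leq C_{b}\,\|\R^{b/\nu}u\|_{L^{p}(\G)},\qquad q_{b}:=\tfrac{pQ}{Q-bp}.
\end{equation*}
Applying this twice, with $(\R,b)=(\R_{1},a_{1})$ and $(\R,b)=(\R_{2},a_{2})$, yields
\begin{equation*}
\|u\|_{L^{q_{1}}(\G)}\leq C\,\|\R_{1}^{a_{1}/\nu_{1}}u\|_{L^{p}(\G)},\qquad \|u\|_{L^{q_{2}}(\G)}\leq C\,\|\R_{2}^{a_{2}/\nu_{2}}u\|_{L^{p}(\G)},
\end{equation*}
where $q_{i}=pQ/(Q-a_{i}p)$ for $i=1,2$. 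When $a_{2}=0$ the second line collapses to the tautology $\|u\|_{L^{p}}=\|u\|_{L^{p}}$ with $q_{2}=p$, which is still consistent with what follows.

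Second, I would interpolate in the target Lebesgue space. Since $q_{2}\leq q\leq q_{1}$ by hypothesis, there is a unique $\theta\in[0,1]$ solving $1/q=\theta/q_{1}+(1-\theta)/q_{2}$, and the standard H\"older inequality gives
\begin{equation*}
\|u\|_{L^{q}(\G)}\leq \|u\|_{L^{q_{1}}(\G)}^{\theta}\,\|u\|_{L^{q_{2}}(\G)}^{1-\theta}.
\end{equation*}
Substituting the two Sobolev bounds and raising to the $q$-th power produces an inequality of the same shape as \eqref{GN1}, with exponents $\theta q/p$ and $(1-\theta)q/p$ on the two factors on the right.

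Third, I would verify the arithmetic. Solving $pQ/q=Q-a_{2}p-\theta(a_{1}-a_{2})p$ yields $\theta(a_{1}-a_{2})pq=Q(q-p)-a_{2}pq$, hence
\begin{equation*}
\tfrac{\theta q}{p}=\tfrac{Q(q-p)-a_{2}pq}{(a_{1}-a_{2})p^{2}},\qquad \tfrac{(1-\theta)q}{p}=\tfrac{a_{1}pq-Q(q-p)}{(a_{1}-a_{2})p^{2}},
\end{equation*}
which are exactly the exponents in \eqref{GN1}. The endpoint cases $q=q_{1}$ and $q=q_{2}$ correspond respectively to $\theta=1$ and $\theta=0$, where \eqref{GN1} reduces to a single Sobolev embedding, confirming the boundary behaviour. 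The only genuine obstacle in this argument is therefore the quoted homogeneous Sobolev embedding for Rockland operators; the rest is an interpolation identity that transfers verbatim from the Euclidean proof.
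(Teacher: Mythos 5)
Your proposal is correct and follows essentially the same route as the paper: H\"older interpolation of $\|u\|_{L^{q}}$ between $\|u\|_{L^{q_{1}}}$ and $\|u\|_{L^{q_{2}}}$ with $q_{i}=pQ/(Q-a_{i}p)$, followed by the homogeneous Sobolev embedding for each Rockland operator (the paper cites this as \cite[Proposition 4.4.13, (5)]{FR16}), and your exponent computation agrees with the paper's $s=\frac{Q(q-p)-a_{2}pq}{(a_{1}-a_{2})pq}$. The endpoint and $a_{2}=0$ observations are also consistent with how the paper treats them.
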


The proof of Theorem \ref{THM-GN} will be based on the following Sobolev embedding:
\begin{cor}[{\cite[Proposition 4.4.13, (5)]{FR16}}]
\label{cor_FR}
Let $\G$ be a graded Lie group of homogeneous dimension $Q$. Let $a>0$ and $1<p<q<\infty$ be such that
\begin{equation}\label{cor_ineq1}
Q\left(\frac{1}{p}-\frac{1}{q}\right)=a.
\end{equation}
Then we have
\begin{equation}\label{cor_ineq2}
\|u\|_{L^{q}(\G)}\lesssim \|u\|_{\dot{L}^{p}_a(\G)}\simeq \|\mathcal{R}^{\frac{a}{\nu}}u\|_{L^{p}(\G)},
\end{equation}
for all $u\in \dot{L}^{p}_{a}(\G)$,
where $\mathcal{R}$ is any positive Rockland operator on $\G$ of homogeneous degree $\nu$.
\end{cor}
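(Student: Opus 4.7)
The plan is to prove the embedding by realising $u$ as a fractional integral (Riesz potential) of $\mathcal{R}^{a/\nu}u$ and then invoking a Hardy--Littlewood--Sobolev type convolution estimate on $\G$. First, because $\mathcal{R}$ is a positive Rockland operator of homogeneous degree $\nu$, I would define its complex powers by the spectral calculus, or equivalently via the subordination formula
$$
\mathcal{R}^{-a/\nu}=\frac{1}{\Gamma(a/\nu)}\int_0^\infty t^{\frac{a}{\nu}-1}\,e^{-t\mathcal{R}}\,dt,
$$
using the heat semigroup $e^{-t\mathcal{R}}$, whose convolution kernel $h_t$ is Schwartz and satisfies the scaling identity $h_t(x)=t^{-Q/\nu}h_1(D_{t^{-1/\nu}}x)$. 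A change of variables in $t$ then shows $\mathcal{R}^{-a/\nu}f=f\ast \mathcal{I}_a$ for a kernel $\mathcal{I}_a$ that is smooth away from $0$ and homogeneous of degree $a-Q$, that is, $\mathcal{I}_a(D_rx)=r^{a-Q}\mathcal{I}_a(x)$.

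Second, I would extract the size information from the homogeneity. Fixing a homogeneous quasi-norm $|\cdot|$ on $\G$ and using the dilation scaling of the Haar measure by $r^Q$, the homogeneity of $\mathcal{I}_a$ plus smoothness away from the origin yields $|\mathcal{I}_a(x)|\leq C|x|^{a-Q}$, from which a direct computation of level sets gives $\mathcal{I}_a\in L^{r,\infty}(\G)$ with $r=Q/(Q-a)$. The condition $1<p<q<\infty$ with $Q(1/p-1/q)=a$ is exactly the relation $1+1/q=1/p+1/r$ with $r=Q/(Q-a)$.

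Third, I would apply the Hardy--Littlewood--Sobolev inequality on the graded group to the representation $u=(\mathcal{R}^{a/\nu}u)\ast\mathcal{I}_a$, obtaining
$$
\|u\|_{L^q(\G)}=\|(\mathcal{R}^{a/\nu}u)\ast\mathcal{I}_a\|_{L^q(\G)}\leq C\,\|\mathcal{I}_a\|_{L^{r,\infty}(\G)}\,\|\mathcal{R}^{a/\nu}u\|_{L^p(\G)},
$$
which is the asserted estimate $\|u\|_{L^q(\G)}\lesssim \|\mathcal{R}^{a/\nu}u\|_{L^p(\G)}$. The equivalence $\|u\|_{\dot L^p_a(\G)}\simeq \|\mathcal{R}^{a/\nu}u\|_{L^p(\G)}$ is the definition of the homogeneous Sobolev norm associated with $\mathcal{R}$, and its independence of the choice of positive Rockland operator (invoked implicitly in the statement) follows from the boundedness on $L^p$ of operators of the form $\mathcal{R}_2^{a/\nu_2}\mathcal{R}_1^{-a/\nu_1}$, which are Fourier multipliers of homogeneous degree $0$ in the sense of the graded calculus.

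The main obstacle is the weak-$L^{r}$ property of $\mathcal{I}_a$, and before that, the very construction of the kernel with the correct homogeneity, since unlike on $\mathbb R^n$ (where the Riesz kernel is the explicit power $c_a|x|^{a-n}$) on a general graded group no closed form is available and one must rely on heat-kernel estimates together with the homogeneity of $\mathcal{R}$. Once these ingredients are in place, the Hardy--Littlewood--Sobolev convolution bound itself carries over from $\mathbb R^n$ to $\G$ with essentially the same proof, because it only uses the homogeneous structure (covering by quasi-balls, distribution-function arguments, and Marcinkiewicz interpolation) rather than any Euclidean specificity.
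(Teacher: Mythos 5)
The paper does not prove this corollary itself: it is quoted directly from \cite[Proposition 4.4.13 (5)]{FR16}, and your Riesz-potential argument (writing $u=(\mathcal{R}^{a/\nu}u)\ast\mathcal{I}_a$ with $\mathcal{I}_a$ built from the heat semigroup, extracting the $(a-Q)$-homogeneity and the weak-$L^{Q/(Q-a)}$ bound, and concluding by the Hardy--Littlewood--Sobolev convolution inequality on homogeneous groups) is exactly the standard proof given in that reference and, earlier, in Folland's work on stratified groups. Your sketch is correct, including the correct side of the convolution for a left-invariant operator and the reduction of the norm equivalence to $L^p$-boundedness of the $0$-homogeneous operators $\mathcal{R}_2^{a/\nu_2}\mathcal{R}_1^{-a/\nu_1}$.
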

We are now ready to prove Theorem \ref{THM-GN}.
\begin{proof}[Proof of Theorem \ref{THM-GN}] In the case $q=\frac{pQ}{Q-a_{2}p}$, we have $\frac{Q(q-p)-a_{2}pq}{(a_{1}-a_{2})p^{2}}=0$,  $\frac{a_{1}pq-Q(q-p)}{(a_{1}-a_{2})p^{2}}=\frac{q}{p}$ and $Q\left(\frac{1}{p}-\frac{1}{q}\right)=a_{2}$. Then, the inequality \eqref{GN1} has the form
$$\int_{\mathbb{G}}|u(x)|^{q}dx\leq C \left(\int_{\mathbb{G}}|\mathcal{R}_{2}^{\frac{a_{2}}{\nu_{2}}}u(x)|^{p}dx\right)^{\frac{q}{p}},$$
which is equivalent to the Sobolev inequality \eqref{cor_ineq2}.

In the case $q=\frac{pQ}{Q-a_{1}p}$, one gets $\frac{a_{1}pq-Q(q-p)}{(a_{1}-a_{2})p^{2}}=0$,  $\frac{Q(q-p)-a_{2}pq}{(a_{1}-a_{2})p^{2}}=\frac{q}{p}$ and $Q\left(\frac{1}{p}-\frac{1}{q}\right)=a_{1}$. Then, the inequality \eqref{GN1} has the form
$$\int_{\mathbb{G}}|u(x)|^{q}dx\leq C \left(\int_{\mathbb{G}}|\mathcal{R}_{1}^{\frac{a_{1}}{\nu_{1}}}u(x)|^{p}dx\right)^{\frac{q}{p}},$$
which is equivalent to the Sobolev inequality \eqref{cor_ineq2}.

Now let us consider the case $\frac{pQ}{Q-a_{2}p}<q<\frac{pQ}{Q-a_{1}p}$. We denote $p_{1}=\frac{pQ}{Q-a_{1}p}$ and $p_{2}=\frac{pQ}{Q-a_{2}p}$.    Using H\"{o}lder's inequality we obtain that
$$\int_{\mathbb{G}}|u(x)|^{q}dx=\int_{\mathbb{G}}|u(x)|^{qs}|u(x)|^{q(1-s)}dx\leq \left(\int_{\mathbb{G}}|u(x)|^{p_{1}}dx\right)^{\frac{qs}{p_{1}}}\left(\int_{\mathbb{G}}|u(x)|^{p_{2}}dx\right)^{\frac{q(1-s)}{p_{2}}},$$
where $\frac{qs}{p_{1}}+\frac{q(1-s)}{p_{2}}=1$. Then we find that $s=\frac{Q(q-p)-a_{2}pq}{(a_{1}-a_{2})pq}$, so that
$$\int_{\mathbb{G}}|u(x)|^{q}dx\leq\left(\int_{\mathbb{G}}|u(x)|^{p_{1}}dx\right)^{\frac{Q(q-p)-a_{2}pq}{(a_{1}-a_{2})pp_{1}}}
\left(\int_{\mathbb{G}}|u(x)|^{p_{2}}dx\right)^{\frac{a_{1}pq-Q(q-p)}{(a_{1}-a_{2})pp_{2}}}.$$
Then, using \eqref{cor_ineq2} for $\|u\|_{L^{p_1}(\G)}$ and $\|u\|_{L^{p_2}(\G)}$ we obtain \eqref{GN1}.
\end{proof}

\section{Existence of least energy solutions for  a class of nonlinear subelliptic equations}
\label{SEC:exist}

In this section we investigate the existence of least energy solutions to a class of nonlinear Schr\"{o}dinger type equations associated with the positive homogeneous Rockland operators.

So, let operators $\R_{1}$ and $\R_{2}$ be positive Rockland operators of homogeneous degrees $\nu_{1}$ and $\nu_{2}$, respectively, on a graded Lie group $\G$ of homogeneous dimension $Q$. Let $a_{1}> a_{2}\geq0$, $1<p<\frac{Q}{a_{1}}$ and $\frac{pQ}{Q-a_{2}p}<q<\frac{pQ}{Q-a_{1}p}$.
Let us consider the following Schr\"{o}dinger equation with the power nonlinearity
\begin{equation}\label{nonlinear}
\mathcal{R}_{1}^{\frac{a_{1}}{\nu_{1}}}(|\mathcal{R}_{1}^{\frac{a_{1}}{\nu_{1}}}u|^{p-2}\mathcal{R}_{1}^{\frac{a_{1}}{\nu_{1}}}u)+
\mathcal{R}_{2}^{\frac{a_{2}}{\nu_{2}}}(|\mathcal{R}_{2}^{\frac{a_{2}}{\nu_{2}}}u|^{p-2}\mathcal{R}_{2}^{\frac{a_{2}}{\nu_{2}}}u)=|u|^{q-2}u, \quad u\in L^{p}_{a_{1},a_{2}}(\mathbb{G}).
\end{equation}

Now, we briefly formulate some notations and definitions.

\begin{defn}
We say that the function $u\in L^{p}_{a_{1},a_{2}}(\mathbb{G})$ is a solution of \eqref{nonlinear} if and only if for any $\psi\in L^{p}_{a_{1},a_{2}}(\mathbb{G})$ the identity

$$\int_{\mathbb{G}}(|\mathcal{R}_{1}^{\frac{a_{1}}{\nu_{1}}}u(x)|^{p-2}\mathcal{R}_{1}^{\frac{a_{1}}{\nu_{1}}}
u(x)\overline{\mathcal{R}_{1}^{\frac{a_{1}}{\nu_{1}}}\psi(x)}+
\int_{\mathbb{G}}(|\mathcal{R}_{2}^{\frac{a_{2}}{\nu_{2}}}u(x)|^{p-2}\mathcal{R}_{2}^{\frac{a_{2}}{\nu_{2}}}
u(x)\overline{\mathcal{R}_{2}^{\frac{a_{2}}{\nu_{2}}}\psi(x)}$$
\begin{equation}\label{solution}
-|u(x)|^{q-2}u(x)\overline{\psi(x)})dx=0
\end{equation}
holds.
\end{defn}

By $\mathfrak{L}:L^{p}_{a_{1},a_{2}}(\mathbb{G})\to \mathbb R$ and $\mathfrak{I}:L^{p}_{a_{1},a_{2}}(\mathbb{G})\to \mathbb R$ we denote the following functionals acting on $L^{p}_{a_{1},a_{2}}(\mathbb{G})\cap L^{q}(\G)$:
\begin{equation}\label{L}
\mathfrak{L}(u):=\frac{1}{p}\int\limits_{\mathbb{G}}|\mathcal{R}_{1}^{\frac{a_{1}}{\nu_{1}}}u(x)|^{p}dx+
\frac{1}{p}\int\limits_{\mathbb{G}}|\mathcal{R}_{2}^{\frac{a_{2}}{\nu_{2}}}u(x)|^{p}dx
-\frac{1}{q}\int\limits_{\mathbb{G}}|u(x)|^{q}dx
\end{equation}
and
\begin{equation}\label{I}
\mathfrak{I}(u):=\int\limits_{\mathbb{G}}(|\mathcal{R}_{1}^{\frac{a_{1}}{\nu_{1}}}u(x)|^{p}+
|\mathcal{R}_{2}^{\frac{a_{2}}{\nu_{2}}}u(x)|^{p}-|u(x)|^{q})dx.
\end{equation}

We denote the Nehari set by
\begin{equation}\label{N}
\mathcal{N}:=\{u\in L^{p}_{a_{1},a_{2}}(\mathbb{G})\ \backslash\{0\}: \I(u)=0\}
\end{equation}
and we put
\begin{equation}\label{d}
d:=\inf\{\L(u):u\in\mathcal{N}\}.
\end{equation}

\begin{defn}\label{def}
Let $\Gamma$ be the set of the solutions of \eqref{nonlinear}, that is,
$$\Gamma=\{\phi\in L^{p}_{a_{1},a_{2}}(\mathbb{G}):\L'(\phi)=0\;\;{\rm and}\;\;\phi\neq0\}.$$
Let $\mathcal{G}$ be the set of least energy solutions of \eqref{nonlinear}, namely,
$$\mathcal{G}=\{u\in \Gamma: \L(u)\leq \L(\upsilon) \;\;{\rm for \;\;any}\;\;\upsilon\in\Gamma\}.$$
\end{defn}

Now we state the main result of this section.
\begin{thm}\label{thm1}
Let $a_{1}> a_{2}\geq0$, $1<p<\frac{Q}{a_{1}}$ and $\frac{pQ}{Q-a_{2}p}<q<\frac{pQ}{Q-a_{1}p}$. Then the Schr\"{o}dinger type equation \eqref{nonlinear} has a least energy solution $\phi\in L^{p}_{a_{1},a_{2}}(\mathbb{G})$.

Moreover, we have $d=\L(\phi)$.
\end{thm}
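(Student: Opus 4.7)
The plan is to minimise $\L$ directly on the Nehari set $\N$ defined in \eqref{N}. Set $\mathcal{E}(u):=\|\R_1^{a_1/\nu_1}u\|_{L^p(\G)}^p+\|\R_2^{a_2/\nu_2}u\|_{L^p(\G)}^p$. For any nonzero $u\in L^p_{a_1,a_2}(\G)$ the scalar equation $\I(tu)=t^p\mathcal{E}(u)-t^q\|u\|_{L^q}^q=0$ has a unique positive root, so $\N\neq\emptyset$; on $\N$ the functional simplifies to $\L(u)=(\tfrac1p-\tfrac1q)\mathcal{E}(u)$. Using the Gagliardo--Nirenberg inequality of Theorem \ref{THM-GN} (whose exponents satisfy $\theta_1+\theta_2=q/p>1$) together with the bound $X^{\theta_1}Y^{\theta_2}\leq(X+Y)^{q/p}$ yields $\mathcal{E}(u)=\|u\|_{L^q}^q\leq C\,\mathcal{E}(u)^{q/p}$ on $\N$, which forces $\mathcal{E}(u)\geq C^{-p/(q-p)}$ and hence $d>0$.

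Next, I would take a minimising sequence $\{u_n\}\subset\N$. From $\mathcal{E}(u_n)\to\tfrac{pq}{q-p}d$ one obtains boundedness in $L^p_{a_1,a_2}(\G)$ together with a uniform lower bound $\|u_n\|_{L^q}\geq c_0>0$. Since $\L$ and $\I$ are left-invariant, to prevent the sequence from drifting off I would prove a graded-group analogue of Lions' non-vanishing lemma: from boundedness and the nontrivial $L^q$-mass there exist $R,\delta>0$ and $y_n\in\G$ with $\int_{B(y_n,R)}|u_n|^q\,dx\geq\delta$. Replacing $u_n$ by $v_n(x):=u_n(y_n\cdot x)$, which still lies in $\N$ with the same energy, and passing to a weakly convergent subsequence $v_n\rightharpoonup v_*$ in $L^p_{a_1,a_2}(\G)$, the local Rellich-type compactness $L^p_{a}(\G)\hookrightarrow L^q(B(e,R))$ gives $v_n\to v_*$ in $L^q(B(e,R))$, so $v_*\neq 0$.

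I would then identify $v_*=\phi$ by a Brezis--Lieb splitting: extracting a.e.\ convergence (up to subsequence) of $v_n$ and of $\R_j^{a_j/\nu_j}v_n$, each term of $\I$ decomposes as $\I(v_n)=\I(v_*)+\I(v_n-v_*)+o(1)$. If $\I(v_*)<0$, rescaling $v_*$ by some $t\in(0,1)$ puts $tv_*\in\N$ with $\L(tv_*)<(\tfrac1p-\tfrac1q)\mathcal{E}(v_*)\leq d$, contradicting the definition of $d$; if $\I(v_*)>0$, then $\I(v_n-v_*)\to-\I(v_*)<0$ and the analogous rescaling of $v_n-v_*$, combined with the uniform positive lower bound on $\mathcal{E}$ over $\N$, again contradicts the definition of $d$. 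Hence $\I(v_*)=0$, so $v_*\in\N$, and weak lower semicontinuity of $\mathcal{E}$ yields $\L(v_*)\leq\liminf\L(v_n)=d$, forcing equality. Finally $\langle\I'(v_*),v_*\rangle=(p-q)\mathcal{E}(v_*)<0$ ensures that $\N$ is a $C^1$ manifold near $v_*$, so the Lagrange multiplier $\lambda$ in $\L'(v_*)=\lambda\,\I'(v_*)$ vanishes upon testing against $v_*$, giving $\L'(v_*)=0$; thus $\phi:=v_*$ solves \eqref{nonlinear}. Any nontrivial solution of \eqref{nonlinear} automatically lies in $\N$ (test \eqref{solution} against itself), so $d=\L(\phi)$ is indeed the least energy.

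The main obstacle is the compactness step: the non-compactness of $\G$ combined with the left-translation invariance of $\L$ and $\I$ makes naive weak-limit arguments insufficient, so one must establish a Lions-type concentration lemma together with a local Rellich--Kondrachov statement for the Sobolev spaces $L^p_a(\G)$. I expect these inputs can be assembled from mapping properties of $\R^{-a/\nu}$ and the Sobolev embedding of Corollary \ref{cor_FR}, combined with standard covering arguments on quasi-balls in $\G$.
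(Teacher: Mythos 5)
Your proposal is correct and follows essentially the same route as the paper: minimisation on the Nehari set, a positive lower bound for $d$ via the Gagliardo--Nirenberg inequality, recovery of compactness by left translations and a concentration-compactness/local Rellich argument, a Brezis--Lieb splitting to show the weak limit satisfies $\I(v_*)=0$ with the same two rescaling contradictions, and a Lagrange multiplier argument to kill the multiplier. The only (harmless) deviations are that you skip the Ekeland variational principle, which the paper invokes but does not really exploit, and you conclude via weak lower semicontinuity of $\mathcal{E}$ rather than by proving strong convergence of the translated minimising sequence.
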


From now on in this section we assume that $a_{1},a_{2},p,q$ satisfy conditions of Theorem \ref{thm1}.
Before proving this theorem let us show the following lemmas.

\begin{lem}\label{LM: 2.1}
For all $u\in L^{p}_{a_{1},a_{2}}(\mathbb{G})\setminus\{0\},$ there exists a unique $\mu_{u}>0$ such that $\mu_{u}u\in\mathcal{N}$. Moreover, for $\I(u)<0$ we have $0<\mu_{u}<1$.
\end{lem}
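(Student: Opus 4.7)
The plan is to exploit the fact that for any fixed $u$ in the Sobolev space, the scalar function $\mu\mapsto\I(\mu u)$ is an explicit two-term expression, thanks to the homogeneities of the integrands in \eqref{I}: under $u\mapsto\mu u$, both Rockland-norm terms scale as $\mu^{p}$ while $\int|u|^{q}dx$ scales as $\mu^{q}$. Setting
$$A(u):=\|\mathcal{R}_{1}^{a_{1}/\nu_{1}}u\|_{L^{p}(\G)}^{p}+\|\mathcal{R}_{2}^{a_{2}/\nu_{2}}u\|_{L^{p}(\G)}^{p},\qquad B(u):=\|u\|_{L^{q}(\G)}^{q},$$
one has $\I(\mu u)=\mu^{p}A(u)-\mu^{q}B(u)$ for every $\mu>0$.

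First I would verify that $A(u)>0$ and $B(u)>0$ for any nonzero $u\in L^{p}_{a_{1},a_{2}}(\G)$. Positivity of $A(u)$ follows directly from the definition \eqref{norm}, since $A(u)=\|u\|_{L^{p}_{a_{1},a_{2}}(\G)}^{p}$, which is strictly positive whenever $u\neq 0$. For $B(u)$, the Gagliardo--Nirenberg inequality of Theorem \ref{THM-GN} (whose hypotheses $a_{1}>a_{2}\geq 0$, $1<p<Q/a_{1}$ and $\frac{pQ}{Q-a_{2}p}\leq q\leq\frac{pQ}{Q-a_{1}p}$ cover the present range) ensures that $u\in L^{q}(\G)$; and $u\neq 0$ as an equivalence class means $u$ is not a.e.\ zero, so $B(u)>0$ as well.

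Next, the equation $\I(\mu u)=0$ with $\mu>0$ and $A(u),B(u)>0$ reduces to $\mu^{q-p}=A(u)/B(u)$. Since the admissible range forces $q>p$ (as $q>\frac{pQ}{Q-a_{2}p}\geq p$), this has the unique positive solution
$$\mu_{u}=\Bigl(\frac{A(u)}{B(u)}\Bigr)^{\frac{1}{q-p}},$$
and by construction $\mu_{u}u\in\N$. For the last claim, observe that $\I(u)=A(u)-B(u)$, so $\I(u)<0$ is equivalent to $A(u)<B(u)$, i.e.\ $A(u)/B(u)<1$; since the exponent $1/(q-p)$ is positive, this at once yields $0<\mu_{u}<1$.

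I do not anticipate a serious obstacle: once the homogeneity is recorded, the argument is a one-variable calculation. The only point meriting care is the strict positivity of $B(u)$, which is where the continuous Sobolev embedding $L^{p}_{a_{1},a_{2}}(\G)\hookrightarrow L^{q}(\G)$ from Theorem \ref{THM-GN} is invoked, so that $\int_{\G}|u|^{q}dx$ is simultaneously finite and positive for every nonzero $u$ in our space.
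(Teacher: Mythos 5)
Your proof is correct and follows essentially the same route as the paper: the paper simply writes down the explicit value $\mu_{u}=\|u\|_{L^{p}_{a_{1},a_{2},\R_{1},\R_{2}}(\G)}^{\frac{p}{q-p}}\|u\|_{L^{q}(\G)}^{-\frac{q}{q-p}}$, which coincides with your $\bigl(A(u)/B(u)\bigr)^{1/(q-p)}$, and reads off uniqueness and the bound $\mu_{u}<1$ from it. Your version merely makes explicit the homogeneity computation and the finiteness/positivity of $B(u)$ via the Gagliardo--Nirenberg embedding, which the paper leaves implicit.
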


\begin{proof}[Proof of Lemma \ref{LM: 2.1}]
It is easy to see that for an arbitrary $u\in L^{p}_{a_{1},a_{2}}(\mathbb{G})\setminus\{0\}$ and for
$$
\mu_{u}=\|u\|_{L^{p}_{a_{1}, a_{2}, \R_{1}, \R_{2}}(\mathbb{G})}^{\frac{p}{q-p}} \|u\|^{-\frac{q}{q-p}}_{L^{q}(\mathbb{G})},
$$
we get $\mu_{u}u\in\mathcal{N}$. Also, it is clear that $\mu_{u}$ is unique. Thus, from the
expression for $\mu_{u}$ we obtain that $0<\mu_{u}<1$ provided that  $\|u\|_{L^{p}_{a_{1}, a_{2}, \R_{1}, \R_{2}}(\mathbb{G})}^{p}<\|u\|^{q}_{L^{q}(\mathbb{G})}$.
\end{proof}

\begin{lem}\label{LM: 2.2}
We have $\underset{u\in\mathcal{N}}{\rm inf}\|u\|_{L^{p}_{a_{1}, a_{2}, \R_{1}, \R_{2}}(\mathbb{G})}>0$.
\end{lem}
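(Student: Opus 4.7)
The plan is to combine the defining identity of the Nehari manifold with the Gagliardo-Nirenberg inequality (Theorem \ref{THM-GN}) to extract a lower bound via the gap $q-p>0$. Under the hypotheses, $q>\frac{pQ}{Q-a_2p}\geq p$, so $q-p>0$ is strict, which will be essential.

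First, I would observe that for any $u\in\mathcal{N}$, the condition $\I(u)=0$ reads
\begin{equation*}
\|u\|_{L^p_{a_1,a_2,\R_1,\R_2}(\G)}^p = \int_\G|\R_1^{a_1/\nu_1}u|^p\,dx+\int_\G|\R_2^{a_2/\nu_2}u|^p\,dx=\int_\G|u|^q\,dx=\|u\|_{L^q(\G)}^q.
\end{equation*}
So the whole task reduces to bounding $\|u\|_{L^q(\G)}^q$ above in terms of $\|u\|_{L^p_{a_1,a_2,\R_1,\R_2}(\G)}^q$ with a universal constant.

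Next, I would apply Theorem \ref{THM-GN}. Setting $X:=\|\R_1^{a_1/\nu_1}u\|_{L^p(\G)}^p$ and $Y:=\|\R_2^{a_2/\nu_2}u\|_{L^p(\G)}^p$, and letting
\begin{equation*}
\alpha=\frac{Q(q-p)-a_2pq}{(a_1-a_2)p^2},\qquad \beta=\frac{a_1pq-Q(q-p)}{(a_1-a_2)p^2},
\end{equation*}
a direct computation gives $\alpha+\beta=q/p$. The Gagliardo-Nirenberg inequality \eqref{GN1} then says $\|u\|_{L^q(\G)}^q\leq C\,X^\alpha Y^\beta$. Since $\alpha,\beta\geq 0$ and $\alpha+\beta=q/p$, the elementary bound $X^\alpha Y^\beta\leq (X+Y)^{\alpha+\beta}$ yields
\begin{equation*}
\|u\|_{L^q(\G)}^q\leq C\,(X+Y)^{q/p}=C\,\|u\|_{L^p_{a_1,a_2,\R_1,\R_2}(\G)}^q.
\end{equation*}

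Combining the two displays for $u\in\mathcal{N}$ gives
\begin{equation*}
\|u\|_{L^p_{a_1,a_2,\R_1,\R_2}(\G)}^p \leq C\,\|u\|_{L^p_{a_1,a_2,\R_1,\R_2}(\G)}^q.
\end{equation*}
Since $u\neq 0$ forces $\|u\|_{L^p_{a_1,a_2,\R_1,\R_2}(\G)}>0$ (if this norm vanished then $\R_j^{a_j/\nu_j}u=0$ for $j=1,2$, and combined with $\I(u)=0$ one gets $u=0$), we may divide to conclude $\|u\|_{L^p_{a_1,a_2,\R_1,\R_2}(\G)}^{q-p}\geq 1/C$, and since $q>p$ the constant $C^{-1/(q-p)}$ is a strictly positive lower bound, uniform in $u\in\mathcal{N}$.

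The only subtlety is handling the boundary exponents in the Gagliardo-Nirenberg inequality: the strict inequalities assumed on $q$ in Theorem \ref{thm1} ensure $\alpha,\beta>0$ and $q>p$, and also put $q$ in the admissible range of Theorem \ref{THM-GN}. No compactness or concentration-compactness argument is needed; the lemma is essentially a direct consequence of the sharpness gap $q-p>0$.
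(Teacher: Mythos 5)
Your proposal is correct and follows essentially the same route as the paper: use $\I(u)=0$ to write $\|u\|_{L^{p}_{a_{1},a_{2},\R_{1},\R_{2}}(\G)}^{p}=\|u\|_{L^{q}(\G)}^{q}$, apply the Gagliardo-Nirenberg inequality \eqref{GN1}, bound the product of the two homogeneous seminorms by $\|u\|_{L^{p}_{a_{1},a_{2},\R_{1},\R_{2}}(\G)}^{q}$ (the exponents summing to $q$), and divide using $q>p$. The only cosmetic difference is that you invoke $X^{\alpha}Y^{\beta}\leq(X+Y)^{\alpha+\beta}$ where the paper bounds each factor by the full norm separately; the extra care you take about non-vanishing of the norm and the strictness of $q>p$ is welcome but does not change the argument.
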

\begin{proof}[Proof of Lemma \ref{LM: 2.2}]
Using the inequality \eqref{GN1}, we get for any $u\in\mathcal{N}$, that
\begin{equation*}
\begin{split}
\|u\|_{L^{p}_{a_{1}, a_{2}, \R_{1}, \R_{2}}(\mathbb{G})}^{p}=\|u\|^{q}_{L^{q}(\mathbb{G})}&\leq C \|\mathcal{R}_{1}^{\frac{a_{1}}{\nu_{1}}} u\|^{\frac{Q(q-p)-a_{2}pq}{(a_{1}-a_{2})p}}_{L^{p}(\mathbb{G})} \|\mathcal{R}_{2}^{\frac{a_{2}}{\nu_{2}}}u\|^{\frac{a_{1}pq-Q(q-p)}{(a_{1}-a_{2})p}}_{L^{p}(\mathbb{G})} \\
&\leq C \| u\|_{L^{p}_{a_{1}, a_{2}, \R_{1}, \R_{2}}(\mathbb{G})}^{\frac{Q(q-p)-a_{2}pq}{(a_{1}-a_{2})p}}
\|u\|_{L^{p}_{a_{1}, a_{2}, \R_{1}, \R_{2}}(\mathbb{G})}^{\frac{a_{1}pq-Q(q-p)}{(a_{1}-a_{2})p}}\\
&\leq C \| u\|_{L^{p}_{a_{1}, a_{2}, \R_{1}, \R_{2}}(\mathbb{G})}^{q}.
\end{split}
\end{equation*}
Thus, we obtain $\|u\|_{L^{p}_{a_{1}, a_{2}, \R_{1}, \R_{2}}(\mathbb{G})}^{q-p}\geq C^{-1}$. Putting $\kappa=C^{-\frac{1}{q-p}}>0$, we observe that $\|u\|_{L^{p}_{a_{1}, a_{2}, \R_{1}, \R_{2}}(\mathbb{G})}\geq \kappa$ for all $u\in\mathcal{N}$.
\end{proof}

\begin{lem}\label{LM: 2.3}
Assume that $a_{1}> a_{2}\geq0$, $1<p<\frac{Q}{a_{1}}$ and $\frac{pQ}{Q-a_{2}p}<q<\frac{pQ}{Q-a_{1}p}$. Let $D\subset\mathbb{G}$ be a smooth bounded domain. Then, we get the compact embedding $L^{p}_{a_{1}, a_{2}}(D)\hookrightarrow L^{q}(D)$.
\end{lem}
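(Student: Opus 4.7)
The strategy is to reduce the compactness of the two-norm embedding to a single-index Rellich--Kondrachov-type statement, and then obtain the target exponent by interpolation. A direct comparison of the norms \eqref{norm_0} and \eqref{norm} yields the trivial continuous inclusion
\[
L^p_{a_1,a_2}(D) \hookrightarrow \dot{L}^p_{a_1}(D), \qquad \|u\|_{\dot{L}^p_{a_1}(D)} \leq \|u\|_{L^p_{a_1,a_2}(D)},
\]
so it suffices to prove that $\dot{L}^p_{a_1}(D) \hookrightarrow L^q(D)$ is compact under the assumptions $1 < p < Q/a_1$ and $q < p_1 := pQ/(Q - a_1 p)$, the latter inequality being strict by the hypothesis on $q$.

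For this single-index compactness I would proceed as follows. The critical Sobolev embedding of Corollary \ref{cor_FR} provides the continuous inclusion $\dot{L}^p_{a_1}(D) \hookrightarrow L^{p_1}(D)$. Given a bounded sequence $\{u_n\} \subset \dot{L}^p_{a_1}(D)$, I would extend each $u_n$ by zero to $\G$ (using smoothness of $\partial D$ to preserve a uniform Sobolev bound) and apply the Fréchet--Kolmogorov criterion in $L^p(D)$. Tightness is automatic since $D$ is bounded; the remaining equi-continuity
\[
\sup_n \|u_n(\cdot\, h) - u_n(\cdot)\|_{L^p(\G)} \longrightarrow 0 \quad \text{as } h \to e,
\]
follows from the heat-semigroup description of the Rockland Sobolev norm $\|\mathcal{R}_1^{a_1/\nu_1}\cdot\|_{L^p(\G)}$ developed in \cite[Ch.~4]{FR16}, yielding a subsequence converging strongly in $L^p(D)$. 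Hölder interpolation
\[
\|u_n - u_m\|_{L^q(D)} \leq \|u_n - u_m\|_{L^p(D)}^{\theta} \|u_n - u_m\|_{L^{p_1}(D)}^{1-\theta}, \qquad \tfrac{1}{q} = \tfrac{\theta}{p} + \tfrac{1-\theta}{p_1},
\]
together with the uniform $L^{p_1}(D)$ bound inherited from the Sobolev embedding, then upgrades the convergence to $L^q(D)$. Note that $\theta \in (0,1)$ is guaranteed because $p < pQ/(Q-a_2 p) < q < p_1$.

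The principal technical obstacle is the translation equi-continuity for the Rockland-based Sobolev norm on a general graded Lie group, which replaces the standard mean-value argument available for horizontal derivatives on stratified groups or partial derivatives on $\Rn$. On general graded groups it is accessible through the functional calculus and subelliptic heat-kernel estimates developed in \cite{FR16}. Alternatively, if a Rellich--Kondrachov-type compact embedding $\dot{L}^p_{a_1}(D) \hookrightarrow L^r(D)$ for $r < p_1$ is cited directly from the same reference, the single-index step is immediate and the plan collapses to the first paragraph.
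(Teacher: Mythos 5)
Your route is genuinely different from the paper's. The paper's proof applies the Gagliardo--Nirenberg inequality \eqref{GN1} directly to the differences $u_k-u$ of a weakly convergent sequence, writing $\|u_k-u\|_{L^q}^q$ as a power of $\|\mathcal{R}_1^{a_1/\nu_1}(u_k-u)\|_{L^p}$ (merely bounded) times a power of the lower-order factor $\|\mathcal{R}_2^{a_2/\nu_2}(u_k-u)\|_{L^p}$, so that the compactness must ultimately come from strong convergence of that lower-order factor, i.e.\ from a Rellich-type statement at the level $a_2<a_1$; as written the paper's last step infers convergence from boundedness alone, so your instinct to isolate an explicit compactness input is well placed. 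You instead discard the $\mathcal{R}_2$-term, prove a single-index Rellich--Kondrachov statement $\dot{L}^p_{a_1}(D)\hookrightarrow L^p(D)$ via the Fr\'echet--Kolmogorov criterion, and recover the exponent $q$ by H\"older interpolation against the uniform $L^{p_1}$ bound; the exponent bookkeeping is correct (note only that when $a_2=0$ one has $pQ/(Q-a_2p)=p$, so the chain is $p\le pQ/(Q-a_2p)<q<p_1$, still giving $\theta\in(0,1)$). This buys a clean separation between the compactness input and the interpolation, at the price of the translation-equicontinuity estimate for the fractional Rockland--Sobolev norm, which you correctly identify as the crux and which the heat-semigroup decomposition $u-u(\cdot h)=(u-e^{-t\mathcal{R}_1}u)+\big(e^{-t\mathcal{R}_1}u-e^{-t\mathcal{R}_1}u(\cdot h)\big)+\big(e^{-t\mathcal{R}_1}u(\cdot h)-u(\cdot h)\big)$ does deliver. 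One repair is needed: extension by zero from a smooth bounded domain is \emph{not} bounded on fractional-order Sobolev spaces in general (already on $\Rn$ for $H^s$ with $s\ge 1/2$), so you should either invoke a bounded extension operator or observe that in the only place the lemma is used (the proof of Theorem \ref{thm1}) the sequence consists of restrictions of globally defined functions with global Sobolev bounds, which makes the extension step unnecessary.
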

\begin{proof}[Proof of Lemma \ref{LM: 2.3}]
Let us take a bounded sequence $(u_{k})_{k\in\mathbb{N}}\subset L^{p}_{a_{1},a_{2}}(D)$. Suppose that the sequence $u_{k}$ converges to $u$ in the weak sense in $L^{p}_{a_{1}, a_{2}}(D)$. Then, by using the inequality \eqref{GN1}, we find
\begin{equation*}
\begin{split}
\|u_{k}-u\|^{q}_{L^{q}(\mathbb{G})}&\leq C \|\mathcal{R}_{1}^{\frac{a_{1}}{\nu_{1}}}u_{k}-\mathcal{R}_{1}^{\frac{a_{1}}{\nu_{1}}} u\|^{\frac{Q(q-p)-a_{2}pq}{(a_{1}-a_{2})p}}_{L^{p}(\mathbb{G})}
\|\mathcal{R}_{2}^{\frac{a_{2}}{\nu_{2}}}u_{k}-\mathcal{R}_{2}^{\frac{a_{2}}{\nu_{2}}}u\|^{\frac{a_{1}pq-Q(q-p)}{(a_{1}-a_{2})p}}_{L^{p}(\mathbb{G})}\\
&\leq C (\|u_{k}\|_{L^{p}_{a_{1}, a_{2}, \R_{1}, \R_{2}}(\mathbb{G})}+\|u\|_{L^{p}_{a_{1}, a_{2}, \R_{1}, \R_{2}}(\mathbb{G})})^{\frac{Q(q-p)-a_{2}pq}{(a_{1}-a_{2})p}}
\end{split}
\end{equation*}
$$\times\|\mathcal{R}_{2}^{\frac{a_{2}}{\nu_{2}}}u_{k}-\mathcal{R}_{2}^{\frac{a_{2}}
{\nu_{2}}}u\|^{\frac{a_{1}pq-Q(q-p)}{(a_{1}-a_{2})p}}_{L^{p}(\mathbb{G})}$$
$$\leq C (\|u_{k}\|_{L^{p}_{a_{1}, a_{2}, \R_{1}, \R_{2}}(\mathbb{G})}+\|u\|_{L^{p}_{a_{1}, a_{2}, \R_{1}, \R_{2}}(\mathbb{G})})^{\frac{Q(q-p)-a_{2}pq}{(a_{1}-a_{2})p}}$$
$$\times\|u_{k}-u\|^{\frac{a_{1}pq-Q(q-p)}{(a_{1}-a_{2})p}}_{L^{p}_{a_{1}, a_{2}, \R_{1}, \R_{2}}(\mathbb{G})}.$$
From the boundedness of the sequence
$(u_{k})_{k\in\mathbb{N}}$ in the space $L^{p}_{a_{1}, a_{2}}(D)$ we get $\|u_{k}-u\|_{L^{q}(\mathbb{G})}\to 0$ as $k\to\infty$. Thus, the lemma is proved.
\end{proof}

\begin{lem}\label{LM: 2.4}
If $v\in\N$ and $\L(v)=d$ then $v$ must be a least energy solution of the nonlinear equation \eqref{nonlinear}.
\end{lem}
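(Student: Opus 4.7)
The statement is a standard consequence of constrained minimization on a Nehari manifold, so the plan is to apply a Lagrange multiplier argument, then eliminate the multiplier, and finally compare energies.

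\emph{Step 1: Regularity and nontriviality of the constraint.} I would first note that, as functionals on $L^{p}_{a_{1},a_{2}}(\mathbb{G})$ (modulated by the Sobolev embedding $L^{p}_{a_{1},a_{2}}(\mathbb{G})\hookrightarrow L^{q}(\mathbb{G})$ coming from Theorem \ref{THM-GN}), both $\mathfrak{L}$ and $\mathfrak{I}$ are of class $C^{1}$, with Fr\'echet derivatives acting on a test $w\in L^{p}_{a_{1},a_{2}}(\mathbb{G})$ by
\[
\mathfrak{I}'(u)[w]=p\sum_{i=1}^{2}\int_{\mathbb{G}}|\mathcal{R}_{i}^{a_{i}/\nu_{i}}u|^{p-2}\mathcal{R}_{i}^{a_{i}/\nu_{i}}u\,\overline{\mathcal{R}_{i}^{a_{i}/\nu_{i}}w}\,dx-q\int_{\mathbb{G}}|u|^{q-2}u\,\overline{w}\,dx,
\]
and $\mathfrak{L}'(u)[w]$ given by the same expression with $p,q$ replaced by $1,1$. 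For $v\in\mathcal{N}$ the identity $\mathfrak{I}(v)=0$ gives
\[
\mathfrak{I}'(v)[v]=p\,\|v\|_{L^{p}_{a_{1},a_{2}}(\mathbb{G})}^{p}-q\,\|v\|_{L^{q}(\mathbb{G})}^{q}=(p-q)\,\|v\|_{L^{q}(\mathbb{G})}^{q}.
\]
Since $q>p$ and $v\neq 0$ (and $\|v\|_{L^{q}}>0$ by Lemma \ref{LM: 2.2} together with Theorem \ref{THM-GN}), this is strictly nonzero; in particular $\mathfrak{I}'(v)\not\equiv 0$.

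\emph{Step 2: Lagrange multiplier.} Because $v$ minimises $\mathfrak{L}$ on the $C^{1}$ manifold $\mathcal{N}=\{u\neq 0:\mathfrak{I}(u)=0\}$ and $\mathfrak{I}'(v)\neq 0$, the Lagrange multiplier rule yields a scalar $\lambda\in\mathbb{R}$ with $\mathfrak{L}'(v)=\lambda\,\mathfrak{I}'(v)$. Testing this equation against $v$ gives, on the left,
\[
\mathfrak{L}'(v)[v]=\|v\|_{L^{p}_{a_{1},a_{2}}(\mathbb{G})}^{p}-\|v\|_{L^{q}(\mathbb{G})}^{q}=\mathfrak{I}(v)=0,
\]
while on the right $\lambda\,\mathfrak{I}'(v)[v]=\lambda(p-q)\|v\|_{L^{q}(\mathbb{G})}^{q}\neq 0$ unless $\lambda=0$. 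Hence $\lambda=0$ and $\mathfrak{L}'(v)=0$, which, after reading off the three terms in $\mathfrak{L}'(v)[\psi]=0$ for all $\psi\in L^{p}_{a_{1},a_{2}}(\mathbb{G})$, is precisely the weak formulation \eqref{solution} of \eqref{nonlinear}. Thus $v\in\Gamma$.

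\emph{Step 3: Least-energy property.} To upgrade $v\in\Gamma$ to $v\in\mathcal{G}$ I would take any other $\upsilon\in\Gamma$, so $\mathfrak{L}'(\upsilon)=0$ and $\upsilon\neq 0$. Testing $\mathfrak{L}'(\upsilon)=0$ against $\upsilon$ gives $\mathfrak{I}(\upsilon)=0$, hence $\upsilon\in\mathcal{N}$. By the definition \eqref{d} of $d$ and the hypothesis $\mathfrak{L}(v)=d$, we get $\mathfrak{L}(v)=d\leq\mathfrak{L}(\upsilon)$. Since $\upsilon\in\Gamma$ was arbitrary, $v\in\mathcal{G}$ by Definition \ref{def}.

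\emph{Anticipated difficulty.} There is nothing deep here once the infimum $d$ is attained on $\mathcal{N}$; the only care needed is the qualification $\mathfrak{I}'(v)\neq 0$ for the multiplier rule. The crucial ingredient making this automatic is the strict inequality $q>p$ (from the open range $\frac{pQ}{Q-a_{2}p}<q<\frac{pQ}{Q-a_{1}p}$) together with the lower bound in Lemma \ref{LM: 2.2}; this is what forces $\lambda=0$ and makes constrained critical points actually critical. The genuinely hard step in the surrounding argument is the existence of the minimiser $v$ itself (addressed elsewhere in Theorem \ref{thm1} via the compact embedding Lemma \ref{LM: 2.3} and concentration-compactness), not this lemma.
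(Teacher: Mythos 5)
Your proof is correct and follows essentially the same route as the paper: a Lagrange multiplier argument on the Nehari set, elimination of the multiplier via $\langle\mathfrak{I}'(v),v\rangle=(p-q)\|v\|_{L^q}^q\neq 0$ (using $q>p$), and then comparison of energies. Your Step 3, observing that every nontrivial critical point of $\mathfrak{L}$ lies in $\mathcal{N}$ so that $d$ bounds the energy of all solutions, merely spells out the final step the paper leaves to Definition \ref{def}.
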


\begin{proof}[Proof of Lemma \ref{LM: 2.4}]  One can conclude from the Lagrange
multiplier rule that there exists a real number $\theta$ such that for an arbitrary $\psi\in L^{p}_{a_{1},a_{2}}(\mathbb{G})$ we have
$$
\langle \L'(v), \psi \rangle_{\G}=\theta \langle \I'(v), \psi\rangle_{\G},
$$
due to the assumption on $v$. Here $\langle \cdot, \cdot \rangle_{\G}$ is a dual product between $L^{p}_{a_{1}, a_{2}}(\mathbb{G})$ and its dual space.

Since $q>p$, we get that
$$
\langle \I'(v), v\rangle_{\G} = p \| v \|_{L^{p}_{a_{1}, a_{2}, \R_{1}, \R_{2}}(\mathbb{G})}^{p} - q \int_{\mathbb{G}} |v|^{q} d x=(p-q) \int_{\mathbb{G}}|v|^{q} d x <0.
$$
From this we have
$$
\langle \L'(v), v\rangle_{\G}=\I(v)=0.
$$
Then due to facts
$$
\langle \I'(v), v\rangle_{\G}<0
$$
and
$$
\langle \L'(v), v\rangle_{\G}=0
$$
we obtain that $\theta = 0$. Thus, we get $\L'(v) = 0$. By taking into account Definition \ref{def}, we conclude that $v$ is a least energy solution of the nonlinear equation \eqref{nonlinear}.
Lemma \ref{LM: 2.4} is proved.
\end{proof}

Now we are in a position to prove Theorem \ref{thm1}.

\begin{proof}[Proof of Theorem \ref{thm1}] Choose $(v_{k})_{k}\subset\N$ as a minimising sequence. By using the Ekeland variational principle we know that there exists a sequence $(u_{k})_{k}\subset\N$ such that $\L(u_{k})\to d$ and $\L'(u_{k})\to 0$.
Then the Sobolev inequality and Lemma \ref{LM: 2.2} imply that there are two positive
constants $C_1$ and $C_2$ with the properties
$$
C_1\leq\|u_{k}\|_{L^{p}_{a_{1}, a_{2}, \R_{1}, \R_{2}}(\mathbb{G})}\leq C_2.
$$
Taking into account this and
$$
\|u_{k}\|_{L^{p}_{a_{1}, a_{2}, \R_{1}, \R_{2}}(\mathbb{G})}^{p}=\int_{\G}|u_{k}(x)|^{q}dx,
$$
one gets the existence of a positive constant
$C_3$ so that we have
\begin{equation}\label{EQ: (2.1)}
\limsup\limits_{k\to\infty}\int_{\G}|u_{k}(x)|^{q}dx\geq C_3 > 0.
\end{equation}
By Lemma \ref{LM: 2.3} and the concentration compactness argument of \cite[Lemma 3.1]{ST02}, we have that
 $u_{k}\to0$ in $L^{q}(\G)$ for all $\frac{pQ}{Q-a_{2}p}<q<\frac{pQ}{Q-a_{1}p}$
if it is true that
\begin{equation}
\lim\limits_{k\to\infty}\sup\limits_{\eta\in\G}\int\limits_{B(\eta, r)}|u_{k}(x)|^{q} d x = 0,
\end{equation}
for some $r > 0$, where $B(\eta, r)$ is a quasi-ball on $\G$ centred at $\eta$ with radius $r$. Then the fact that there is $C_4>0$ and $r > 1$ such that
\begin{equation}\label{EQ: (2.2)}
\liminf\limits_{k\to\infty}\sup\limits_{\eta\in\G}\int\limits_{B(\eta, r)}|u_{k}(x)|^{q}dx\geq C_4 > 0
\end{equation}
follows from the estimate \eqref{EQ: (2.1)}.
Assume that we have $\tilde{x}^{k}\in\G$ with
\begin{equation}\label{EQ: (2.3)}
\liminf\limits_{k\to\infty}\int\limits_{B(\tilde{x}^{k}, r)}|u_{k}(x)|^{q}dx\geq \frac{C_4}{2} > 0.
\end{equation}
By the left invariance of the Haar measure and of the operators $\R_{1}$, $\R_{2}$ we have
$$
\L(u_{k}(x\tilde{x}))=\L(u_{k}(x))$$
and
$$
\I(u_{k}(x\tilde{x}))=\I(u_{k}(x))
$$
for all $\tilde{x}^{k}\in\G$. Let us introduce $\omega_{k}(x):=u_{k}(x\tilde{x})$. Then we obtain $\L(\omega_{k})=\L(u_{k})$ and $\I(\omega_{k})=\I(u_{k})$. Moreover, it gives the bounded sequence $(\omega_{k})_{k}$ of elements of the space $L^{p}_{a_{1},a_{2}}(\mathbb{G})$ which satisfies
\begin{equation}\label{EQ: (2.4)}
\liminf\limits_{k\to\infty}\int_{B(0, r)}|\omega_{k}(x)|^{q}dx\geq \frac{C_4}{2} > 0.
\end{equation}
There is a subsequence, denoted by $\omega_{k}$ that weakly converges to $\phi$ in the space $L^{p}_{a_{1},a_{2}}(\mathbb{G})$. Then Lemma \ref{LM: 2.3} implies that $\omega_{k}$ strongly converges to $\phi$ in $L^{q}_{loc}(\G)$. Due to this and the estimate \eqref{EQ: (2.4)}, finally, we obtain that $\phi\neq0$.

Now we are in a position to show that $\omega_{k}$ converges strongly to $\phi$ in the space $L^{p}_{a_{1},a_{2}}(\mathbb{G})$. We will show first that $\I(\phi)=0$. Suppose that $\I(\phi)<0$. Lemma \ref{LM: 2.1} implies that there exists a positive number $\mu_{\phi} < 1$ such that $\mu_{\phi}\phi\in\N$ for $\I(\phi) < 0$. Since $\I(\omega_{k}) = 0$, by the Fatou lemma we get
\begin{equation}\label{EQ: (2.5)}
\begin{split}
d+o(1)=\L(\omega_{k}) & =\left(\frac{1}{p}-\frac{1}{q}\right)\int_{\G} |\omega_{k}(x)|^{q}dx\\
& \geq \left(\frac{1}{p}-\frac{1}{q}\right)\int_{\G} |\phi(x)|^{q}dx + o(1) \\
& = \left(\frac{1}{p}-\frac{1}{q}\right)\mu_{\phi}^{-q}\int_{\G} |\mu_{\phi}\phi(x)|^{q}dx + o(1)\\
& =\mu_{\phi}^{-q} \L(\mu_{\phi}\phi)  + o(1).
\end{split}
\end{equation}
Then, from the property $0 < \mu_{\phi} < 1$ we obtain that $d > \L(\mu_{\phi}\phi)$. Since $\mu_{\phi}\phi\in\N$, we get a contradiction.

Suppose now that $\I(\phi)>0$. We use the following lemma:
\begin{lem}[{\cite[Lemma 3]{BL83}}]
\label{BrL_lem}
 Let $\ell:\mathbb{C}\rightarrow\mathbb{R}$ be convex and let $m>1$. Then
$$|\ell(a+b)-\ell(a)|\leq \varepsilon[\ell(ma)-m\ell(a)]+|\ell(C_{\varepsilon}b)|+|\ell(-C_{\varepsilon}b)|$$
for all $a,b \in \mathbb{C}$, $0<\varepsilon<\frac{1}{m}$ and $\frac{1}{C_{\varepsilon}}=\varepsilon(m-1)$.
\end{lem}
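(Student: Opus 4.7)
The plan is to establish the two one-sided inequalities $\ell(a+b)-\ell(a)\leq\textrm{RHS}$ and $\ell(a)-\ell(a+b)\leq\textrm{RHS}$ separately, by exhibiting two carefully chosen convex combinations and invoking the definition of convexity, then to combine them. The value $C_\varepsilon=[\varepsilon(m-1)]^{-1}$ is not arbitrary: it is precisely what turns the two algebraic identities below into genuine \emph{convex} combinations under the hypothesis $0<\varepsilon<1/m$.

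For the upper direction, I would first observe the identity
$$a+b=\varepsilon\cdot(ma)+(1-\varepsilon m)\cdot a+\varepsilon(m-1)\cdot(C_\varepsilon b),$$
whose three coefficients are positive (using $\varepsilon<1/m$ together with $m>1$) and sum to $1$. Applying convexity of $\ell$ to this decomposition and subtracting $\ell(a)$ from both sides yields
$$\ell(a+b)-\ell(a)\leq\varepsilon[\ell(ma)-m\ell(a)]+\varepsilon(m-1)\ell(C_\varepsilon b).$$

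For the lower direction, the idea is to write $a$ itself as a convex combination involving $ma$, $a+b$, and $-C_\varepsilon b$. Solving $a=\alpha(ma)+\beta(a+b)+\gamma(-C_\varepsilon b)$ subject to $\alpha+\beta+\gamma=1$ produces the unique positive solution $\alpha=\varepsilon/(1+m\varepsilon)$, $\beta=1/(1+m\varepsilon)$, $\gamma=\varepsilon(m-1)/(1+m\varepsilon)$. Applying convexity and rearranging gives
$$\ell(a)-\ell(a+b)\leq\varepsilon[\ell(ma)-m\ell(a)]+\varepsilon(m-1)\ell(-C_\varepsilon b).$$
Taking the maximum of the two one-sided bounds, replacing $\ell(\pm C_\varepsilon b)$ by $|\ell(\pm C_\varepsilon b)|$, and absorbing the coefficient $\varepsilon(m-1)<1$ (which follows from $\varepsilon<1/m$), one reaches the stated inequality.

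The only delicate step is finding the correct convex combination for the lower direction. A naive attempt to apply the upper-bound inequality with $(a,b)$ replaced by $(a+b,-b)$ produces the term $\ell(m(a+b))-m\ell(a+b)$ on the right-hand side, which does not match the statement; the key trick is to include $a+b$ itself as one of the three vertices of the convex combination representing $a$, after which everything reduces to solving a small linear system and invoking convexity exactly as in the upper-direction step.
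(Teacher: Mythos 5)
The paper does not prove this lemma at all: it is quoted verbatim from Brezis--Lieb \cite[Lemma 3]{BL83} and used as a black box in the proof of Theorem \ref{thm1}. Your argument is therefore a self-contained reconstruction rather than a parallel to anything in the paper, and it is correct. I checked both decompositions: in the upper direction the coefficients $\varepsilon$, $1-\varepsilon m$, $\varepsilon(m-1)$ are nonnegative (using $0<\varepsilon<1/m$ and $m>1$), sum to $1$, and reproduce $a+b$ since $\varepsilon(m-1)C_\varepsilon b=b$; in the lower direction the weights $\varepsilon/(1+m\varepsilon)$, $1/(1+m\varepsilon)$, $\varepsilon(m-1)/(1+m\varepsilon)$ solve the linear system and reproduce $a$. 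Convexity then yields the two one-sided bounds with the common term $\varepsilon[\ell(ma)-m\ell(a)]$, and since $\varepsilon(m-1)<(m-1)/m<1$ the factors in front of $\ell(\pm C_\varepsilon b)$ can be discarded after passing to absolute values; taking the maximum of the two bounds gives the stated inequality. Your closing remark is also apt: the naive substitution $(a,b)\mapsto(a+b,-b)$ in the upper bound produces $\ell(m(a+b))-m\ell(a+b)$ rather than $\ell(ma)-m\ell(a)$, and including $a+b$ as a vertex of the convex combination representing $a$ is exactly the device that avoids this. This matches the spirit of the original Brezis--Lieb proof, which likewise rests on elementary convexity.
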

Applying this lemma for $\psi_{k}=\omega_{k}-\phi$ we get
$$
0=\I(\omega_{k})=\I(\phi)+\I(\psi_{k})+o(1)
$$
in the case $\I(\phi) > 0$. Consequently, we have
\begin{equation}\label{EQ: (2.6)}
\limsup\limits_{k\to\infty} \I(\psi_{k})< 0.
\end{equation}
Using Lemma \ref{LM: 2.1}, there is a positive constant $\mu_{k}:=\mu_{\psi_{k}}$ with the properties $\mu_{k} \psi_{k}\in \mathcal{N}$ and $\limsup\limits_{k\to\infty}\mu_{k}\in(0, 1)$. Indeed, suppose that $\limsup\limits_{k\to\infty}\mu_{k}=1$. Then we get a subsequence $(\mu_{k_{j}})_{j\in\mathbb{N}}$ such that $\lim\limits_{j\to\infty}\mu_{k_{j}}=1$. From the property $\mu_{k_{j}}\psi_{k_{j}}\in \mathcal{N}$ we obtain $\I(\psi_{k_{j}})=\I(\mu_{k_{j}}\psi_{k_{j}})+o(1)=o(1)$. But this is impossible in view of \eqref{EQ: (2.6)}. Thus, we have that $\limsup\limits_{k\to\infty}\mu_{k}\in(0,1)$. We now observe that
\begin{equation}\label{EQ: (2.7)}
\begin{split}
d+o(1)&=\L(\omega_{k})=\left(\frac{1}{p}-\frac{1}{q}\right)\int_{\mathbb{G}}|\omega_{k}(x)|^{q}dx\\
&\geq\left(\frac{1}{p}-\frac{1}{q}\right)\int_{\mathbb{G}}|\psi_{k}(x)|^{q}dx\\
&=\left(\frac{1}{p}-\frac{1}{q}\right)\mu_{k}^{-q}\int_{\mathbb{G}}|\mu_{k}\psi_{k}(x)|^{q}dx+o(1)\\
&=\mu_{k}^{-q}\L(\mu_{k}\psi_{k})+o(1).
\end{split}
\end{equation}
The fact $\limsup\limits_{k\to\infty}\mu_{k}\in(0,1)$ implies that $d>\L(\mu_{k}\psi_{k})$. It means that we obtain a contradiction since $\mu_{k}\phi_{k}\in\N$.

Thus, we must have $\I(\phi)=0$, so that also $\L(\phi)\geq d$. Now let us prove that $\psi_{k}=\omega_{k}-\phi\to0$ in $L^{p}_{a_{1},a_{2}}(\mathbb{G})$. Indeed, if it is not true, that is, $\|\psi_{k}\|_{L^{p}_{a_{1},a_{2}, \R_{1}, \R_{2}}(\mathbb{G})}$ does not converge to zero as $k\to\infty$, then we have the following cases. The first case is when $\int_{\G}|\psi_{k}(x)|^{q}dx$ does not vanish as $k\to\infty$. Then the equalities
$$
0=\I(\omega_{k})=\I(\phi)+\I(\psi_{k})+o(1)=\I(\psi_{k})+o(1)
$$
and the Brezis-Lieb lemma give us the following contradiction
\begin{equation*}
\begin{split}
d+o(1)&=\L(\omega_{k})=\L(\phi)+\L(\psi_{k})+o(1)\\
&\geq d+d+o(1).
\end{split}
\end{equation*}
The second case is when $\int_{\G}|\psi_{k}(x)|^{q}dx$ vanishes as $k\to\infty$. In this case, we observe also a contradiction:
\begin{equation*}
\begin{split}
d+o(1)&=\L(\omega_{k})=\L(\phi)+\frac{1}{p}\|\psi_{k}\|_{L^{p}_{a_{1}, a_{2}, \R_{1}, \R_{2}}(\mathbb{G})}^{p}+o(1)\\
&\geq d+\frac{1}{p}\|\psi_{k}\|_{L^{p}_{a_{1}, a_{2}, \R_{1}, \R_{2}}(\mathbb{G})}^{p}+o(1)>d,
\end{split}
\end{equation*}
if $\|\psi_{k}\|_{L^{p}_{a_{1}, a_{2}, \R_{1}, \R_{2}}(\mathbb{G})}\nrightarrow0$ as $k\to\infty$. Finally, we obtain that $\omega_{k}$ converges strongly to $\phi$ in the Sobolev space $L^{p}_{a_{1},a_{2}}(\mathbb{G})$ and that $\phi$ is a minimiser of $d$. Lemma \ref{LM: 2.4} guarantees the fact that $\phi$ is a least energy solution of \eqref{nonlinear}.
Theorem \ref{thm1} is proved.
\end{proof}

\section{Best constants in the Gagliardo-Nirenberg inequalities}
\label{SEC:sharp}

In this section we obtain a sharp expression for the smallest positive constant $C$ in \eqref{GN1}.
We denote by $C_{GN, \R_{1},\R_{2}}=C_{GN, \R_{1},\R_{2}, a_{1},a_{2},p,q}$ the smallest positive constant $C$ such that the Gagliardo-Nirenberg inequality \eqref{GN1} holds.
Now let us show the main result of this section.

\begin{thm}\label{sharp}
Let $a_{1}> a_{2}\geq0$, $1<p<\frac{Q}{a_{1}}$ and $\frac{pQ}{Q-a_{2}p}<q<\frac{pQ}{Q-a_{1}p}$. Let $\phi$ be a least energy solution of \eqref{nonlinear} and let $d$ be defined in \eqref{d}. Let $C_{GN, \R_{1}, \R_{2}}$ be the smallest positive constant $C$ in \eqref{GN1}. Then we have
$$C_{GN, \R_{1}, \R_{2}}=\frac{(a_{1}-a_{2})pq}{a_{1}pq-Q(q-p)}
\left(\frac{Q(q-p)-a_{2}pq}{a_{1}pq-Q(q-p)}\right)^{\frac{a_{2}pq-Q(q-p)}{(a_{1}-a_{2})p^{2}}}
\|\mathcal{R}_{2}^{\frac{a_{2}}{\nu_{2}}}\phi\|_{L^{p}(\mathbb{G})}^{p-q}$$
\begin{equation}\label{sharp1} =
\frac{(a_{1}-a_{2})pq}{a_{1}pq-Q(q-p)}
\left(\frac{Q(q-p)-a_{2}pq}{a_{1}pq-Q(q-p)}\right)^{\frac{a_{2}pq-Q(q-p)}{(a_{1}-a_{2})p^{2}}}
\left(\frac{a_{1}pq-Q(q-p)}{(a_{1}-a_{2})(q-p)}d\right)^{\frac{p-q}{p}}.
\end{equation}
\end{thm}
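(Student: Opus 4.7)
The plan is to connect $C_{GN,\mathcal{R}_1,\mathcal{R}_2}$ to the variational value $d$ by a two-parameter rescaling argument, and to verify that equality in \eqref{GN1} is attained at any least energy solution $\phi$ supplied by Theorem \ref{thm1}. Introduce the scale-invariant Weinstein-type functional
\[
J(u):=\frac{\|\mathcal{R}_1^{a_1/\nu_1}u\|_{L^p(\G)}^{p\alpha}\,\|\mathcal{R}_2^{a_2/\nu_2}u\|_{L^p(\G)}^{p\beta}}{\|u\|_{L^q(\G)}^q},\quad \alpha:=\frac{Q(q-p)-a_2pq}{(a_1-a_2)p^2},\;\beta:=\frac{a_1pq-Q(q-p)}{(a_1-a_2)p^2},
\]
for which $p(\alpha+\beta)=q$. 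By homogeneity of $\mathcal{R}_j$ and of the Haar measure, $J(\lambda u(\mu\,\cdot))=J(u)$ for all $\lambda,\mu>0$, and $C_{GN,\mathcal{R}_1,\mathcal{R}_2}=1/\inf_{u\ne 0}J(u)$.

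Next, for any nonzero $u\in L^p_{a_1,a_2}(\G)$, write $A_u:=\|\mathcal{R}_1^{a_1/\nu_1}u\|_p^p$, $B_u:=\|\mathcal{R}_2^{a_2/\nu_2}u\|_p^p$, $C_u:=\|u\|_q^q$, and set $\tilde u(x):=\lambda u(\mu x)$, choosing $\lambda=\lambda(\mu)>0$ so that $\tilde u\in\N$; the Nehari condition gives $\lambda^{q-p}=(\mu^{a_1p}A_u+\mu^{a_2p}B_u)/C_u$ and
\[
\L(\tilde u)=\frac{q-p}{pq}\,\mu^{-Q}\bigl(\mu^{a_1p}A_u+\mu^{a_2p}B_u\bigr)^{q/(q-p)}C_u^{-p/(q-p)}.
\]
Since $a_2pq<Q(q-p)<a_1pq$ by the range of $q$, the right-hand side blows up as $\mu\to 0^+$ and $\mu\to\infty$ and admits a unique critical point at $\mu_*^{(a_1-a_2)p}=\alpha B_u/(\beta A_u)$. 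Substitution, exploiting $p(\alpha+\beta)=q$, collapses the mixed polynomial at $\mu=\mu_*$ into $(A_u^\alpha B_u^\beta)^{p/(q-p)}$ multiplied by an explicit constant $K(\alpha,\beta)$; since $\tilde u\in\N$ forces $\L(\tilde u)\ge d$, this yields \eqref{GN1} with an explicit constant depending only on $\alpha,\beta,d$, hence an upper bound on $C_{GN,\mathcal{R}_1,\mathcal{R}_2}$.

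To obtain equality, apply the argument at $u=\phi$: $\I(\phi)=0$ gives $A_\phi+B_\phi=C_\phi$, so $\lambda(\mu=1)=1$ and $\tilde\phi|_{\mu=1}=\phi$, whence $\L(\tilde\phi)|_{\mu=1}=\L(\phi)=d$. Combined with $\L(\tilde\phi)\ge d$ for all $\mu>0$ and the uniqueness of the critical point, this forces $\mu_*(\phi)=1$, i.e.\ the Pohozaev-type identity $\alpha B_\phi=\beta A_\phi$. Together with $A_\phi+B_\phi=C_\phi$ this solves to $A_\phi=\tfrac{\alpha p}{q}C_\phi$, $B_\phi=\tfrac{\beta p}{q}C_\phi$, so the previous inequality is saturated at $\phi$ and $C_{GN,\mathcal{R}_1,\mathcal{R}_2}=1/J(\phi)=A_\phi^{-\alpha}B_\phi^{-\beta}C_\phi$. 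Using $A_\phi=(\alpha/\beta)B_\phi$ and $C_\phi=((\alpha+\beta)/\beta)B_\phi$ simplifies this to
\[
\frac{1}{J(\phi)}=\frac{\alpha+\beta}{\beta}\,(\alpha/\beta)^{-\alpha}\,\|\mathcal{R}_2^{a_2/\nu_2}\phi\|_{L^p(\G)}^{p-q},
\]
which matches the first line of \eqref{sharp1} upon substituting the values of $\alpha,\beta$; using $C_\phi=pqd/(q-p)$ (immediate from $\L(\phi)=d$ with $\I(\phi)=0$) and hence $B_\phi=(\beta p/q)C_\phi=\tfrac{a_1pq-Q(q-p)}{(a_1-a_2)(q-p)}d$ yields the second line. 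The main obstacle is the exponent bookkeeping: collapsing the minimum over $\mu$ cleanly and matching the resulting multiplicative constant to the precise form in \eqref{sharp1} requires careful application of the identities $p(\alpha+\beta)=q$, $q-\alpha p=\beta p$, and $1-(\alpha+\beta)=(p-q)/p$, but no analytical difficulty beyond scalar minimisation arises.
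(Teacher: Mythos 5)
Your proposal is correct: all the identities check out (in particular $\alpha+\beta=q/p$, the critical point $\mu_*^{(a_1-a_2)p}=\alpha B_u/(\beta A_u)$, the collapse of the minimum into $(A_u^{\alpha}B_u^{\beta})^{p/(q-p)}$, and the final constants agree with \eqref{sharp1}), and the logic of deducing $\inf J=J(\phi)$ from the saturation of the chain of inequalities at $u=\phi$ is sound. The overall strategy is the same as the paper's — express $C_{GN,\R_1,\R_2}^{-1}$ as $\inf J$ for the scale-invariant Weinstein functional and evaluate $J(\phi)$ using the Pohozaev-type ratios $\beta A_\phi=\alpha B_\phi$, $A_\phi+B_\phi=C_\phi$ — but your implementation of the two key steps differs in a genuine way. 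For the lower bound $J(u)\ge J(\phi)$, the paper first proves an auxiliary minimality property of $\phi$ (Lemma \ref{sharp_lem2}: $\phi$ minimises $\|\cdot\|_{L^p_{a_1,a_2}}^p$ subject to a fixed $L^q$ constraint) and then performs a two-parameter rescaling matching the $B$- and $C$-norms of a general $u$ to those of $\phi$; you instead project a general $u$ onto the Nehari manifold along the dilation family, minimise the resulting single-variable (log-convex) expression in $\mu$, and invoke only $\L\ge d$ on $\N$. For the Pohozaev identity itself, the paper differentiates $\L$ along the dilation path and uses $\L'(\phi)=0$ (Lemma \ref{sharp_lem}), whereas you obtain it from the observation that $\mu=1$ must coincide with the unique interior minimiser $\mu_*$ of $\mu\mapsto\L(\tilde\phi_\mu)$ on $\N$. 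Your route buys a more self-contained argument (no analogue of Lemma \ref{sharp_lem2} is needed, and the Pohozaev identity follows from the variational characterisation of $d$ alone rather than from differentiating the functional along dilations); the paper's route isolates reusable lemmas that it also feeds into the Sobolev-constant Theorem \ref{Sobolev}. Two small points you should make explicit when writing this up: that $A_u,B_u>0$ for every nonzero $u$ (so that $\mu_*$ is well defined), and that uniqueness of the critical point in $\mu$ follows from strict convexity of $s\mapsto\frac{q}{q-p}\log\bigl(A_ue^{a_1ps}+B_ue^{a_2ps}\bigr)-Qs$.
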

The proof of Theorem \ref{sharp} will be based on the following lemmas:

\begin{lem}\label{sharp_lem}
Let $\phi$ be a least energy solution of \eqref{nonlinear}. Then we have
\begin{equation}\label{sharp_lem1}\int_{\mathbb{G}}|\mathcal{R}_{1}^{\frac{a_{1}}{\nu_{1}}}\phi(x)|^{p}dx=
\frac{Q(q-p)-a_{2}pq}{a_{1}pq-Q(q-p)}\int_{\mathbb{G}}|\mathcal{R}_{2}^{\frac{a_{2}}{\nu_{2}}}\phi(x)|^{p}dx
\end{equation}
and
\begin{equation}\label{sharp_lem2}\int_{\mathbb{G}}|\phi(x)|^{q}dx=\frac{(a_{1}-a_{2})pq}{a_{1}pq-Q(q-p)}
\int_{\mathbb{G}}|\mathcal{R}_{2}^{\frac{a_{2}}{\nu_{2}}}\phi(x)|^{p}dx.\end{equation}
We also have
\begin{equation}\label{sharp_d}
\int_{\mathbb{G}}|\mathcal{R}_{2}^{\frac{a_{2}}{\nu_{2}}}\phi(x)|^{p}dx=\frac{a_{1}pq-Q(q-p)}{(a_{1}-a_{2})(q-p)}d.
\end{equation}
\end{lem}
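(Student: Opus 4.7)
I would abbreviate $A := \|\mathcal{R}_{1}^{a_{1}/\nu_{1}}\phi\|_{L^{p}(\mathbb{G})}^{p}$, $B := \|\mathcal{R}_{2}^{a_{2}/\nu_{2}}\phi\|_{L^{p}(\mathbb{G})}^{p}$, and $C := \|\phi\|_{L^{q}(\mathbb{G})}^{q}$. Since $\phi \in \mathcal{N}$, the relation $\mathfrak{I}(\phi) = 0$ immediately gives the first linear identity $A + B = C$. The strategy is then to obtain a second, independent linear relation between $A,B,C$ (a Pohozaev-type identity) by exploiting the dilation structure of $\mathbb{G}$, and to solve the resulting $2\times 2$ system.

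For $\mu > 0$, set $\phi_{\mu}(x) := \phi(D_{\mu}x)$. Since $\mathcal{R}_{j}^{a_{j}/\nu_{j}}$ is homogeneous of degree $a_{j}$ with respect to $D_{\mu}$ and the Jacobian of $D_{\mu}$ equals $\mu^{Q}$, a change of variables yields
\[
\|\mathcal{R}_{j}^{a_{j}/\nu_{j}}\phi_{\mu}\|_{L^{p}(\mathbb{G})}^{p} = \mu^{a_{j}p-Q}\,\|\mathcal{R}_{j}^{a_{j}/\nu_{j}}\phi\|_{L^{p}(\mathbb{G})}^{p}, \qquad \|\phi_{\mu}\|_{L^{q}(\mathbb{G})}^{q} = \mu^{-Q}C.
\]
By Lemma~\ref{LM: 2.1}, for each $\mu > 0$ there is a unique $\lambda(\mu) > 0$ with $\lambda(\mu)\phi_{\mu} \in \mathcal{N}$; solving $\mathfrak{I}(\lambda(\mu)\phi_{\mu}) = 0$ explicitly gives
\[
\lambda(\mu)^{q-p} \;=\; \frac{\mu^{a_{1}p}A + \mu^{a_{2}p}B}{C},
\]
which is smooth in $\mu$ on $(0,\infty)$ and satisfies $\lambda(1) = 1$. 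Using that $\mathfrak{L}(u) = (1/p - 1/q)\|u\|_{L^{q}(\mathbb{G})}^{q}$ for every $u \in \mathcal{N}$, one finds
\[
g(\mu) \;:=\; \mathfrak{L}(\lambda(\mu)\phi_{\mu}) \;=\; \left(\tfrac{1}{p}-\tfrac{1}{q}\right)\lambda(\mu)^{q}\mu^{-Q}C.
\]
Because $\lambda(\mu)\phi_{\mu} \in \mathcal{N}$, the minimisation defining $d$ gives $g(\mu) \geq d$ for all $\mu > 0$, while $g(1) = \mathfrak{L}(\phi) = d$. Hence $\mu = 1$ minimises the smooth function $g$, so $g'(1) = 0$, and a logarithmic differentiation of the displayed formulae produces the Pohozaev identity
\[
pq\,(a_{1}A + a_{2}B) \;=\; Q(q-p)(A+B).
\]

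Combined with $A + B = C$, the resulting $2\times 2$ system solves (under the standing condition $a_{1}pq - Q(q-p) > 0$) to
\[
A \;=\; \frac{Q(q-p) - a_{2}pq}{a_{1}pq - Q(q-p)}\,B, \qquad C \;=\; \frac{(a_{1}-a_{2})pq}{a_{1}pq - Q(q-p)}\,B,
\]
which are precisely \eqref{sharp_lem1} and \eqref{sharp_lem2}. For \eqref{sharp_d}, I would substitute $A+B=C$ into the definition of $\mathfrak{L}$ to get $d = \mathfrak{L}(\phi) = \tfrac{q-p}{pq}C$, and then use the expression for $C$ in terms of $B$ to conclude $B = \tfrac{a_{1}pq - Q(q-p)}{(a_{1}-a_{2})(q-p)}\,d$.

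The one delicate point I expect is the justification of the Pohozaev identity itself. The conceptually natural route would be to pair $\mathfrak{L}'(\phi) = 0$ against the dilation generator $\sum_{j}\nu_{j}x_{j}\partial_{x_{j}}\phi$, but this vector field need not lie in $L^{p}_{a_{1},a_{2}}(\mathbb{G})$ and is therefore not admissible as a test function in the weak formulation \eqref{solution}. The rescaling $\lambda(\mu)\phi_{\mu}$ introduced above circumvents this obstacle entirely: it stays on the Nehari manifold and reduces the Pohozaev identity to the elementary statement $g'(1) = 0$, bypassing any need to regularise or to interpret the generator of dilations as an admissible variation.
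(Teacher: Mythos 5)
Your proof is correct, and its skeleton is the same as the paper's: first the Nehari identity $A+B=C$ from $\mathfrak{I}(\phi)=0$, then a second linear relation $pq(a_{1}A+a_{2}B)=Q(q-p)(A+B)$ obtained from the dilation structure, and finally solving the $2\times 2$ system and substituting into $d=\bigl(\tfrac1p-\tfrac1q\bigr)C$. The only genuine difference is how the Pohozaev-type relation is justified. The paper uses the $L^{p}$-mass-preserving rescaling $\widetilde{\phi}_{\lambda}(x)=\lambda^{Q/p}\phi(\delta_{\lambda}x)$, computes $\mathfrak{L}(\widetilde{\phi}_{\lambda})$ as an explicit function of $\lambda$, and sets its derivative at $\lambda=1$ equal to zero on the strength of the criticality $\mathfrak{L}'(\phi)=0$; this implicitly invokes differentiability of the curve $\lambda\mapsto\widetilde{\phi}_{\lambda}$ in $L^{p}_{a_{1},a_{2}}(\mathbb{G})$ together with the chain rule. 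You instead take the pure dilation $\phi(D_{\mu}\cdot)$, project it back onto the Nehari manifold via the explicit factor $\lambda(\mu)$ from Lemma \ref{LM: 2.1}, and use only the minimality $\mathfrak{L}(\phi)=d=\inf_{\mathcal{N}}\mathfrak{L}$: since $g(\mu)=\mathfrak{L}(\lambda(\mu)\phi_{\mu})$ is an explicit elementary function of $\mu$ with $g\geq d=g(1)$, the conclusion $g'(1)=0$ is immediate and no infinite-dimensional chain rule is needed. Your version is thus slightly more elementary and uses a weaker hypothesis (minimality on $\mathcal{N}$ rather than $\mathfrak{L}'(\phi)=0$), at the modest cost of the extra bookkeeping for $\lambda(\mu)$; the paper's version applies verbatim to any critical point, not only to minimisers. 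Both computations yield the same identity, and your closing remark about why the dilation generator cannot be used directly as a test function in \eqref{solution} is well taken.
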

\begin{proof}[Proof of Lemma \ref{sharp_lem}] Using the fact that $\phi$ be a least energy solution of \eqref{nonlinear}, we obtain from \eqref{solution} that
\begin{equation}\label{sharp_lem3}
\int_{\mathbb{G}}|\mathcal{R}_{1}^{\frac{a_{1}}{\nu_{1}}}\phi(x)|^{p}dx+\int_{\mathbb{G}}|\mathcal{R}_{2}^{\frac{a_{2}}{\nu_{2}}}\phi(x)|^{p}dx
=\int_{\mathbb{G}}|\phi(x)|^{q}dx.
\end{equation}
On the other hand, for $\lambda>0$ and $\widetilde{\phi}_{\lambda}(x):=\lambda^{\frac{Q}{p}}\phi(\delta_{\lambda}(x))$ we have the relation
$$\L(\widetilde{\phi}_{\lambda}(x))=\frac{\lambda^{Q}}{p}\int_{\G}|\R_{1}^{\frac{a_{1}}{\nu_{1}}}\phi(\delta_{\lambda}(x))|^{p}dx
+\frac{\lambda^{Q}}{p}\int_{\G}|\R_{2}^{\frac{a_{2}}{\nu_{2}}}\phi(\delta_{\lambda}(x))|^{p}dx
-\frac{\lambda^{\frac{Qq}{p}}}{q}\int_{\G}|\phi(\delta_{\lambda}(x))|^{q}dx$$
$$=\frac{\lambda^{a_{1}p}}{p}\int_{\G}|\R_{1}^{\frac{a_{1}}{\nu_{1}}}\phi(x)|^{p}dx
+\frac{\lambda^{a_{2}p}}{p}\int_{\G}|\R_{2}^{\frac{a_{2}}{\nu_{2}}}\phi(x)|^{p}dx
-\frac{\lambda^{\frac{Qq}{p}-Q}}{q}\int_{\G}|\phi(x)|^{q}dx.$$
It follows that
\begin{equation}\label{sharp_lem4} 0=\frac{\partial}{\partial \lambda}\L(\widetilde{\phi}_{\lambda})|_{\lambda=1}=a_{1}\int_{\mathbb{G}}|\mathcal{R}_{1}^{\frac{a_{1}}{\nu_{1}}}\phi(x)|^{p}dx
+a_{2}\int_{\mathbb{G}}|\mathcal{R}_{2}^{\frac{a_{2}}{\nu_{2}}}\phi(x)|^{p}dx
-\frac{Q(q-p)}{pq}\int_{\mathbb{G}}|\phi(x)|^{q}dx.
\end{equation}
The equalities \eqref{sharp_lem3} and \eqref{sharp_lem4} imply \eqref{sharp_lem1} and \eqref{sharp_lem2}.

To show \eqref{sharp_d}, taking into account \eqref{norm} and \eqref{L}, we have by \eqref{sharp_lem2},
\begin{multline*}
d=\L(\phi)=\frac{1}{p}\|\phi\|_{L^{p}_{a_{1}, a_{2}, \R_{1}, \R_{2}}(\mathbb{G})}^{p}-\frac{1}{q}
\|\phi\|^{q}_{L^{q}(\mathbb{G})}\\
=\left(\frac{1}{p}-\frac{1}{q}\right)\|\phi\|^{q}_
{L^{q}(\mathbb{G})}=\frac{(a_{1}-a_{2})(q-p)}{a_{1}pq-Q(q-p)}
\int_{\mathbb{G}}|\mathcal{R}_{2}^{\frac{a_{2}}{\nu_{2}}}\phi(x)|^{p}dx,
\end{multline*}
so that it follows that
\begin{equation*}
\int_{\mathbb{G}}|\mathcal{R}_{2}^{\frac{a_{2}}{\nu_{2}}}\phi(x)|^{p}dx=\frac{a_{1}pq-Q(q-p)}{(a_{1}-a_{2})(q-p)}d,
\end{equation*}
which is \eqref{sharp_d}.
\end{proof}

\begin{lem}\label{sharp_lem2} Let
\begin{equation}\label{T_rho}
T_{\rho, p, q}:=\inf\left\{\|u\|_{L^{p}_{a_{1}, a_{2}, \R_{1}, \R_{2}}(\mathbb{G})}^{p}:u\in L^{p}_{a_{1},a_{2}}(\mathbb{G}) \;\;{\rm and}\;\; \int_{\mathbb{G}}|u(x)|^{q}dx=\rho\right\},
\end{equation}
for $\rho>0$. If $\phi$ is a minimiser obtained in Theorem \ref{thm1}, then $\phi$ is a minimiser of $T_{\rho_{0}, p, q}$ with $\rho_{0}=\int_{\mathbb{G}}|\phi(x)|^{q}dx$.
\end{lem}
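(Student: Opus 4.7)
The plan is to argue by contradiction, using the scaling from Lemma 4.2 (the statement giving a unique $\mu_u>0$ with $\mu_u u \in \N$) to move any competitor for $T_{\rho_0,p,q}$ onto the Nehari manifold $\N$, and then to show this competitor would produce an energy strictly below $d=\L(\phi)$.

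First I would record the baseline. Since $\phi\in\N$ we have $\|\phi\|_{L^p_{a_1,a_2},\R_1,\R_2(\G)}^{p}=\int_{\G}|\phi(x)|^q\,dx=\rho_0$, and therefore
\[
\L(\phi)=\left(\frac{1}{p}-\frac{1}{q}\right)\rho_0=d.
\]
Now assume, for contradiction, that $\phi$ is not a minimiser of $T_{\rho_0,p,q}$. Then there exists $v\in L^p_{a_1,a_2}(\G)$ with $\int_{\G}|v(x)|^q\,dx=\rho_0$ and
\[
\|v\|_{L^p_{a_1,a_2},\R_1,\R_2(\G)}^{p}<\rho_0=\int_{\G}|v(x)|^q\,dx,
\]
which is precisely the statement $\I(v)<0$. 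By Lemma 4.2, there is a unique $\mu_v\in(0,1)$ with $\mu_v v\in\N$; explicitly, $\mu_v^{q-p}=\|v\|_{L^p_{a_1,a_2},\R_1,\R_2(\G)}^{p}\big/\int_{\G}|v(x)|^q\,dx<1$.

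Finally, using $\mu_v v\in \N$ to collapse the two $L^p_{a_1,a_2}$-terms into the $L^q$-term (exactly as in the minimising-sequence computation in the proof of Theorem 4.1),
\[
\L(\mu_v v)=\left(\frac{1}{p}-\frac{1}{q}\right)\mu_v^{q}\int_{\G}|v(x)|^q\,dx=\left(\frac{1}{p}-\frac{1}{q}\right)\mu_v^{q}\rho_0<\left(\frac{1}{p}-\frac{1}{q}\right)\rho_0=d,
\]
which contradicts $d=\inf_{u\in\N}\L(u)$. Hence $\phi$ minimises $T_{\rho_0,p,q}$.

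There is essentially no technical obstacle: the argument only depends on the Nehari-manifold scaling of Lemma 4.2 and the identity $\L|_{\N}=(1/p-1/q)\int|\cdot|^q$, both already established. The only thing to be careful about is ruling out the borderline case, which is handled automatically by strict inequality $\mu_v<1$ whenever $\I(v)<0$.
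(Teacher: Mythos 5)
Your proof is correct and uses the same mechanism as the paper's: scale an arbitrary competitor with the same $L^q$-mass onto the Nehari set via Lemma \ref{LM: 2.1} and compare energies with $d=\L(\phi)$, exploiting that $\L$ restricted to $\mathcal N$ equals $\bigl(\tfrac1p-\tfrac1q\bigr)\int_{\G}|\cdot|^{q}\,dx$. The only difference is that you phrase it as a contradiction (a strictly better competitor would give $\L(\mu_v v)<d$) while the paper argues directly that $\L(\phi)\le\L(\lambda_0 u)$ forces $\|\phi\|\le\|u\|$ for every competitor; this is an immaterial repackaging of the same argument.
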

\begin{proof}[Proof of Lemma \ref{sharp_lem2}]
By definition of $T_{\rho_{0}, p, q}$ we can note that $\|\phi\|_{L^{p}_{a_{1}, a_{2}, \R_{1}, \R_{2}}(\mathbb{G})}^{p}\geq T_{\rho_{0}, p, q}$. Now let us show that $T_{\rho_{0}, p, q}\geq\|\phi\|_{L^{p}_{a_{1}, a_{2}, \R_{1}, \R_{2}}(\mathbb{G})}^{p}$. For all $u\in L^{p}_{a_{1}, a_{2}}(\mathbb{G})$ satisfying $\int_{\mathbb{G}}|u(x)|^{q}dx=\int_{\mathbb{G}}|\phi(x)|^{q}dx$, by Lemma \ref{LM: 2.1} there exists a unique
$$\lambda_{0}=\|u\|_{L^{p}_{a_{1}, a_{2}, \R_{1}, \R_{2}}(\mathbb{G})}^{\frac{p}{q-p}}\left(\int_{\G}|u|^{q}dx\right)^{-\frac{1}{q-p}}$$
such that $\I(\lambda_{0}u)=0$. Using the fact that $\lambda_{0}u\neq0$ and $\phi$ achieves the minimum $d$, by a direct calculation one has
$$\left(\frac{1}{p}-\frac{1}{q}\right)\|\phi\|_{L^{p}_{a_{1}, a_{2}, \R_{1}, \R_{2}}(\mathbb{G})}^{p}=\L(\phi)\leq \L(\lambda_{0}u)=\left(\frac{1}{p}-\frac{1}{q}\right)\lambda_{0}^{p}\|u\|_{L^{p}_{a_{1}, a_{2}, \R_{1}, \R_{2}}(\mathbb{G})}^{p}$$
$$=\left(\frac{1}{p}-\frac{1}{q}\right)\|u\|_{L^{p}_{a_{1}, a_{2}, \R_{1}, \R_{2}}(\mathbb{G})}
^{\frac{p^{2}}{q-p}}\left(\int_{\mathbb{G}}|u(x)|^{q}dx\right)^{-\frac{p}{q-p}}\|u\|_{L^{p}_{a_{1}, a_{2}, \R_{1}, \R_{2}}(\mathbb{G})}^{p}.$$
Then, since $\int_{\mathbb{G}}|u(x)|^{q}dx=\int_{\mathbb{G}}|\phi(x)|^{q}dx$ and $\int_{\mathbb{G}}|\phi(x)|^{q}dx=\|\phi\|_{L^{p}_{a_{1}, a_{2}, \R_{1}, \R_{2}}(\mathbb{G})}^{p}$, we get that $\|\phi\|_{L^{p}_{a_{1}, a_{2}, \R_{1}, \R_{2}}(\mathbb{G})}^{p}=\|u\|_{L^{p}_{a_{1}, a_{2}, \R_{1}, \R_{2}}(\mathbb{G})}^{p}$. From the arbitrariness of $u$ we obtain $T_{\rho_{0}, p, q}\geq\|\phi\|_{L^{p}_{a_{1}, a_{2}, \R_{1}, \R_{2}}(\mathbb{G})}^{p}$. Thus, $T_{\rho_{0}, p, q}=\|\phi\|_{L^{p}_{a_{1}, a_{2}, \R_{1}, \R_{2}}(\mathbb{G})}^{p}$, which gives the fact that $\phi$ is a minimiser of $T_{\rho_{0}, p, q}$.
\end{proof}
We are now ready to prove Theorem \ref{sharp}.
\begin{proof}[Proof of Theorem \ref{sharp}] Let us denote
\begin{equation}\label{J}
J(u):=\left(\int_{\mathbb{G}}|\mathcal{R}_{1}^{\frac{a_{1}}{\nu_{1}}}u(x)|^{p}dx\right)^{\frac{Q(q-p)-a_{2}pq}{(a_{1}-a_{2})p^{2}}}
\left(\int_{\mathbb{G}}|\mathcal{R}_{2}^{\frac{a_{2}}{\nu_{2}}}u(x)|^{p}dx\right)^
{\frac{a_{1}pq-Q(q-p)}{(a_{1}-a_{2})p^{2}}}\left(\int_{\mathbb{G}}|u(x)|^{q}dx\right)^{-1}.
\end{equation}
Then the sharp expression $C_{GN, \R_{1}, \R_{2}}$ can be estimated by studying the following minimisation problem
$$C_{GN, \R_{1}, \R_{2}}^{-1}=\inf\{J(u):u\in L^{p}_{a_{1},a_{2}}(\mathbb{G})\ \backslash\{0\}\}.$$
Since $\phi(x)\neq0$ and $\phi \in  L^{p}_{a_{1},a_{2}}(\mathbb{G})$, using  Lemma \ref{sharp_lem} we calculate
$$J(\phi)=\left(\int_{\mathbb{G}}|\mathcal{R}_{1}^{\frac{a_{1}}{\nu_{1}}}\phi(x)|^{p}dx\right)
^{\frac{Q(q-p)-a_{2}pq}{(a_{1}-a_{2})p^{2}}}
\left(\int_{\mathbb{G}}|\mathcal{R}_{2}^{\frac{a_{2}}{\nu_{2}}}\phi(x)|^{p}dx\right)^
{\frac{a_{1}pq-Q(q-p)}{(a_{1}-a_{2})p^{2}}}\left(\int_{\mathbb{G}}|\phi(x)|^{q}dx\right)^{-1}$$
$$=\left(\frac{Q(q-p)-a_{2}pq}{a_{1}pq-Q(q-p)}\int_{\mathbb{G}}
|\mathcal{R}_{2}^{\frac{a_{2}}{\nu_{2}}}\phi(x)|^{p}dx\right)
^{\frac{Q(q-p)-a_{2}pq}{(a_{1}-a_{2})p^{2}}}
\left(\int_{\mathbb{G}}|\mathcal{R}_{2}^{\frac{a_{2}}{\nu_{2}}}\phi(x)|^{p}dx\right)^
{\frac{a_{1}pq-Q(q-p)}{(a_{1}-a_{2})p^{2}}}$$
$$\times\left(\frac{(a_{1}-a_{2})pq}{a_{1}pq-Q(q-p)}\int_{\mathbb{G}}|\mathcal{R}_{2}^{\frac{a_{2}}{\nu_{2}}}
\phi(x)|^{p}dx\right)^{-1}$$
$$=\frac{a_{1}pq-Q(q-p)}{(a_{1}-a_{2})pq}\left(\frac{Q(q-p)-a_{2}pq}{a_{1}pq-Q(q-p)}\right)
^{\frac{Q(q-p)-a_{2}pq}{(a_{1}-a_{2})p^{2}}}
\left(\int_{\mathbb{G}}|\mathcal{R}_{2}^{\frac{a_{2}}{\nu_{2}}}\phi(x)|^{p}dx\right)^
{\frac{q-p}{p}}.$$
It follows that
\begin{equation}\label{sharp_thm001}
C_{GN, \R_{1}, \R_{2}}^{-1}\leq \frac{a_{1}pq-Q(q-p)}{(a_{1}-a_{2})pq}\left(\frac{Q(q-p)-a_{2}pq}{a_{1}pq-Q(q-p)}\right)
^{\frac{Q(q-p)-a_{2}pq}{(a_{1}-a_{2})p^{2}}}
\left(\int_{\mathbb{G}}|\mathcal{R}_{2}^{\frac{a_{2}}{\nu_{2}}}\phi(x)|^{p}dx\right)^
{\frac{q-p}{p}}.
\end{equation}
Now let us obtain a lower estimate for the constant $C_{GN, \R_{1}, \R_{2}}^{-1}$. For positive parameters $\lambda$ and $\mu$, and for any $u\in L^{p}_{a_{1},a_{2}}(\mathbb{G})\ \backslash\{0\}$, we define $\omega(x):=\lambda u(\delta_{\mu}(x))$. Then, a direct calculation gives that
\begin{equation}\label{sharp_thm01}
\int_{\mathbb{G}}|\mathcal{R}_{1}^{\frac{a_{1}}{\nu_{1}}}\omega(x)|^{p}dx=\lambda^{p}\mu^{a_{1}p-Q}\int_{\mathbb{G}}
|\mathcal{R}_{1}^{\frac{a_{1}}{\nu_{1}}}u(x)|^{p}dx,
\end{equation}
$$\int_{\mathbb{G}}|\mathcal{R}_{2}^{\frac{a_{2}}{\nu_{2}}}\omega(x)|^{p}dx=\lambda^{p}\mu^{a_{2}p-Q}\int_{\mathbb{G}}
|\mathcal{R}_{2}^{\frac{a_{2}}{\nu_{2}}}u(x)|^{p}dx$$
and
$$\int_{\mathbb{G}}|\omega(x)|^{q}dx=\lambda^{q}\mu^{-Q}\int_{\mathbb{G}}|u(x)|^{q}dx.$$
We choose $\lambda$ and $\mu$ such that
\begin{equation}\label{sharp_thm1}\lambda^{p}\mu^{a_{2}p-Q}\int_{\mathbb{G}}
|\mathcal{R}_{2}^{\frac{a_{2}}{\nu_{2}}}u(x)|^{p}dx=\int_{\mathbb{G}}|\mathcal{R}_{2}^{\frac{a_{2}}{\nu_{2}}}\phi(x)|^{p}dx
\end{equation}
and
\begin{equation}\label{sharp_thm2}\lambda^{q}\mu^{-Q}\int_{\mathbb{G}}|u(x)|^{q}dx=\int_{\mathbb{G}}|\phi(x)|^{q}dx.\end{equation}
By \eqref{sharp_thm1}, \eqref{sharp_thm2} and \eqref{sharp_lem2}, one has
$$\lambda^{q}\mu^{-Q}\int_{\mathbb{G}}|u(x)|^{q}dx=\int_{\mathbb{G}}|\phi(x)|^{q}dx=\frac{(a_{1}-a_{2})pq}
{a_{1}pq-Q(q-p)}\int_{\mathbb{G}}|\mathcal{R}_{2}^{\frac{a_{2}}{\nu_{2}}}\phi(x)|^{p}dx$$
$$=\frac{(a_{1}-a_{2})pq}
{a_{1}pq-Q(q-p)}\lambda^{p}\mu^{a_{2}p-Q}\int_{\mathbb{G}}|\mathcal{R}_{2}^{\frac{a_{2}}{\nu_{2}}}u(x)|^{p}dx.$$
It follows that
$$\lambda^{p}=\left(\frac{(a_{1}-a_{2})pq}
{a_{1}pq-Q(q-p)}\right)^{\frac{p}{q-p}}\mu^{\frac{a_{2}p^{2}}{q-p}}
\left(\int_{\mathbb{G}}|\mathcal{R}_{2}^{\frac{a_{2}}{\nu_{2}}}u(x)|^{p}dx\right)^{\frac{p}{q-p}}
\left(\int_{\mathbb{G}}|u(x)|^{q}dx\right)^{-\frac{p}{q-p}}.$$
From this and \eqref{sharp_thm1}, we have
$$\mu^{a_{1}p-Q}=\left(\frac{(a_{1}-a_{2})pq}
{a_{1}pq-Q(q-p)}\right)^{\frac{p}{q-p}\cdot
\frac{(a_{1}p-Q)(q-p)}{Q(q-p)-a_{2}pq}}\left(\int_{\mathbb{G}}|\mathcal{R}_{2}^{\frac{a_{2}}{\nu_{2}}}
u(x)|^{p}dx\right)^{\frac{q}{q-p}\cdot\frac{(a_{1}p-Q)(q-p)}{Q(q-p)-a_{2}pq}}$$
$$\times\left(\int_{\mathbb{G}}|u(x)|^{q}dx\right)^{-\frac{p}{q-p}\cdot\frac{(a_{1}p-Q)(q-p)}{Q(q-p)-a_{2}pq}}
\left(\int_{\mathbb{G}}|\mathcal{R}_{2}^{\frac{a_{2}}{\nu_{2}}}\phi(x)|^{p}dx\right)^{-\frac{(a_{1}p-Q)(q-p)}{Q(q-p)-a_{2}pq}}$$
$$=\left(\frac{(a_{1}-a_{2})pq}
{a_{1}pq-Q(q-p)}\right)^{\frac{p(a_{1}p-Q)}{Q(q-p)-a_{2}pq}}\left(\int_{\mathbb{G}}|\mathcal{R}_{2}^{\frac{a_{2}}{\nu_{2}}}
u(x)|^{p}dx\right)^{\frac{q(a_{1}p-Q)}{Q(q-p)-a_{2}pq}}$$
\begin{equation}\label{sharp_thm4}\times\left(\int_{\mathbb{G}}|u(x)|^{q}dx\right)^{-\frac{p(a_{1}p-Q)}{Q(q-p)-a_{2}pq}}
\left(\int_{\mathbb{G}}|\mathcal{R}_{2}^{\frac{a_{2}}{\nu_{2}}}\phi(x)|^{p}dx\right)^{-\frac{(a_{1}p-Q)(q-p)}{Q(q-p)-a_{2}pq}},\end{equation}
and
$$\lambda^{p}=\left(\frac{(a_{1}-a_{2})pq}
{a_{1}pq-Q(q-p)}\right)^{\frac{p(Q-a_{2}p)}{Q(q-p)-a_{2}pq}}\left(\int_{\mathbb{G}}|\mathcal{R}_{2}^{\frac{a_{2}}{\nu_{2}}}
u(x)|^{p}dx\right)^{\frac{q(Q-a_{2}p)}{Q(q-p)-a_{2}pq}}$$
$$\times\left(\int_{\mathbb{G}}|u(x)|^{q}dx\right)^{-\frac{p(Q-a_{2}p)}{Q(q-p)-a_{2}pq}}
\left(\int_{\mathbb{G}}|\mathcal{R}_{2}^{\frac{a_{2}}{\nu_{2}}}\phi(x)|^{p}dx\right)^{-\frac{(Q-a_{2}p)(q-p)}{Q(q-p)-a_{2}pq}}
$$
\begin{equation}\label{sharp_thm3}\times\left(\int_{\mathbb{G}}|\mathcal{R}_{2}^{\frac{a_{2}}{\nu_{2}}}u(x)|^{p}dx\right)^{-1}
\int_{\mathbb{G}}|\mathcal{R}_{2}^{\frac{a_{2}}{\nu_{2}}}\phi(x)|^{p}dx.
\end{equation}
By \eqref{sharp_thm1} and \eqref{sharp_thm2} we have
$$
\int_{\mathbb{G}}|\mathcal{R}_{2}^{\frac{a_{2}}{\nu_{2}}}\omega(x)|^{p}dx=\int_{\mathbb{G}}|\mathcal{R}_{2}^{\frac{a_{2}}{\nu_{2}}}\phi(x)|^{p}dx \;\textrm{ and }\;\int_{\mathbb{G}}|\omega(x)|^{q}dx=\int_{\mathbb{G}}|\phi(x)|^{q}dx.
$$
Since $\phi$ is a minimiser of $T_{\rho_{0}, p, q}$ with $\rho_{0}=\int_{\mathbb{G}}|\phi(x)|^{q}dx$ by Lemma \ref{sharp_lem2}, we obtain that
$$\int_{\mathbb{G}}|\mathcal{R}_{1}^{\frac{a_{1}}{\nu_{1}}}\omega(x)|^{p}dx\geq
\int_{\mathbb{G}}|\mathcal{R}_{1}^{\frac{a_{1}}{\nu_{1}}}\phi(x)|^{p}dx,$$
and using \eqref{sharp_thm01} and \eqref{sharp_lem1}, one gets
$$\lambda^{p}\mu^{a_{1}p-Q}\int_{\mathbb{G}}
|\mathcal{R}_{1}^{\frac{a_{1}}{\nu_{1}}}u(x)|^{p}dx=\int_{\mathbb{G}}|\mathcal{R}_{1}^{\frac{a_{1}}{\nu_{1}}}\omega(x)|^{p}dx$$
$$\geq\int_{\mathbb{G}}|\mathcal{R}_{1}^{\frac{a_{1}}{\nu_{1}}}\phi(x)|^{p}dx=
\frac{Q(q-p)-a_{2}pq}{a_{1}pq-Q(q-p)}\int_{\mathbb{G}}|\mathcal{R}_{2}^{\frac{a_{2}}{\nu_{2}}}\phi(x)|^{p}dx.$$
Putting here \eqref{sharp_thm3} and \eqref{sharp_thm4}, one has
$$\left(\frac{(a_{1}-a_{2})pq}
{a_{1}pq-Q(q-p)}\right)^{\frac{(a_{1}-a_{2})p^{2}}{Q(q-p)-a_{2}pq}}
\left(\int_{\mathbb{G}}|\mathcal{R}_{2}^{\frac{a_{2}}{\nu_{2}}}
u(x)|^{p}dx\right)^{\frac{(a_{1}-a_{2})pq}{Q(q-p)-a_{2}pq}}$$
$$\times\left(\int_{\mathbb{G}}|u(x)|^{q}dx\right)^{-\frac{(a_{1}-a_{2})p^{2}}{Q(q-p)-a_{2}pq}}
\left(\int_{\mathbb{G}}|\mathcal{R}_{2}^{\frac{a_{2}}{\nu_{2}}}\phi(x)|^{p}dx\right)^{-\frac{(a_{1}-a_{2})p(q-p)}{Q(q-p)-a_{2}pq}}
$$
$$\times\left(\int_{\mathbb{G}}|\mathcal{R}_{2}^{\frac{a_{2}}{\nu_{2}}}u(x)|^{p}dx\right)^{-1}
\int_{\mathbb{G}}|\mathcal{R}_{2}^{\frac{a_{2}}{\nu_{2}}}\phi(x)|^{p}dx$$
$$\times\int_{\mathbb{G}}
|\mathcal{R}_{1}^{\frac{a_{1}}{\nu_{1}}}u(x)|^{p}dx\geq \frac{Q(q-p)-a_{2}pq}{a_{1}pq-Q(q-p)}\int_{\mathbb{G}}|\mathcal{R}_{2}^{\frac{a_{2}}{\nu_{2}}}\phi(x)|^{p}dx.$$
Taking into account the definition of $J(u)$ in \eqref{J}, we obtain
$$J(u)\geq \frac{a_{1}pq-Q(q-p)}{(a_{1}-a_{2})pq}\left(\frac{Q(q-p)-a_{2}pq}{a_{1}pq-Q(q-p)}\right)^{\frac{Q(q-p)-a_{2}pq}{(a_{1}-a_{2})p^{2}}}
\left(\int_{\mathbb{G}}|\mathcal{R}_{2}^{\frac{a_{2}}{\nu_{2}}}\phi(x)|^{p}dx\right)^{\frac{q-p}{p}}.$$
Since $u$ is chosen arbitrarily, we arrive at
\begin{equation}\label{sharp_thm5}C_{GN, \R_{1}, \R_{2}}^{-1}\geq\frac{a_{1}pq-Q(q-p)}{(a_{1}-a_{2})pq}\left(\frac{Q(q-p)-a_{2}pq}{a_{1}pq-Q(q-p)}\right)^{\frac{Q(q-p)-a_{2}pq}{(a_{1}-a_{2})p^{2}}}
\left(\int_{\mathbb{G}}|\mathcal{R}_{2}^{\frac{a_{2}}{\nu_{2}}}\phi(x)|^{p}dx\right)^{\frac{q-p}{p}}.
\end{equation}
Thus, from \eqref{sharp_thm001} and \eqref{sharp_thm5}, we obtain the first equality in \eqref{sharp1}.

Finally, the second equality in \eqref{sharp1} follows from the first equality in \eqref{sharp1} and \eqref{sharp_d}.
\end{proof}

\section{Best constants in the Sobolev inequalities}
\label{SEC:sharp2}

In this section, we investigate the constant $C_{S, \mathcal{R}}=C_{S, \mathcal{R}, a, p, q}$, which is the smallest positive constant in the following Sobolev inequality
\begin{equation}\label{Sobolev}
\left(\int_{\G}|u(x)|^{q}dx\right)^{\frac{p}{q}}\leq C\int_{\G}(|\mathcal{R}^{\frac{a}{\nu}}u(x)|^{p}+|u(x)|^{p})dx,
\end{equation}
where $u\in L^{p}_{a}(\mathbb{G})$, which is the Sobolev embedding theorem, on graded Lie groups established in \cite[Theorem 4.4.28]{FR16}.
Therefore, we can write
\begin{equation}\label{Sobolev_sharp1}
C_{S, \R}^{-1}:=\inf_{u\in L^{p}_{a}(\mathbb{G})\backslash\{0\}}\frac{\int_{\G}(|\mathcal{R}^{\frac{a}{\nu}}u(x)|^{p}+|u(x)|^{p})dx}
{\left(\int_{\G}|u(x)|^{q}dx\right)^{\frac{p}{q}}}.
\end{equation}
\begin{thm}\label{Sobolev}
 Let $a>0$, $1<p<\frac{Q}{a}$ and $p<q<\frac{pQ}{Q-ap}$. Let $\phi$ be a least energy solution of \eqref{nonlinear} and let $C_{S, \R}$ be the smallest positive constant $C$ in \eqref{Sobolev}. Then we have
\begin{equation}\label{Sobolev_sharp2}
C_{S, \R}=\left(\frac{apq}{apq-Q(q-p)}\int_{\G}|\phi(x)|^{p}dx\right)^{\frac{p-q}{q}}=
\left(\frac{pq}{q-p}d\right)^{\frac{p-q}{q}},
\end{equation}
where $d$ is defined in \eqref{d}.
\end{thm}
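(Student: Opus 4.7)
The plan is to recognise that this Sobolev theorem is exactly the specialisation of Theorem \ref{sharp} to $a_{2}=0$. When $a_{2}=0$ the operator $\mathcal R_{2}^{a_{2}/\nu_{2}}$ reduces to the identity, the space $L^{p}_{a,0}(\G)$ is $L^{p}_{a}(\G)$ equipped with the norm $(\|\mathcal R^{a/\nu}u\|_{L^{p}}^{p}+\|u\|_{L^{p}}^{p})^{1/p}$, the nonlinear equation \eqref{nonlinear} becomes $\mathcal R^{a/\nu}(|\mathcal R^{a/\nu}\phi|^{p-2}\mathcal R^{a/\nu}\phi)+|\phi|^{p-2}\phi=|\phi|^{q-2}\phi$, and the hypothesis $p<q<pQ/(Q-ap)$ here is exactly the specialisation of the hypothesis of Theorem \ref{thm1}. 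In particular, a least energy solution $\phi$ is furnished by Theorem \ref{thm1}. By definition of the best constant,
\begin{equation*}
C_{S,\mathcal R}^{-1}=\inf_{u\in L^{p}_{a}(\G)\setminus\{0\}} J(u),\qquad J(u):=\frac{\int_{\G}(|\mathcal R^{a/\nu}u|^{p}+|u|^{p})\,dx}{\bigl(\int_{\G}|u|^{q}\,dx\bigr)^{p/q}}.
\end{equation*}

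The first step is the sharp lower bound $J(u)\ge (pq\,d/(q-p))^{(q-p)/q}$ for every $u\ne 0$. By Lemma \ref{LM: 2.1} there is a unique $\mu_{u}>0$ with $\mu_{u}u\in\mathcal N$, and on $\mathcal N$ the functional $\mathfrak L$ reduces to $(1/p-1/q)\int_{\G}|\cdot|^{q}\,dx$; consequently
\begin{equation*}
\frac{q-p}{pq}\,\mu_{u}^{q}\int_{\G}|u|^{q}\,dx=\mathfrak L(\mu_{u}u)\ge d.
\end{equation*}
Substituting the explicit expression $\mu_{u}^{q-p}=\bigl(\int_{\G}(|\mathcal R^{a/\nu}u|^{p}+|u|^{p})\,dx\bigr)/\bigl(\int_{\G}|u|^{q}\,dx\bigr)$ coming from Lemma \ref{LM: 2.1} and rearranging gives exactly $J(u)\ge (pq\,d/(q-p))^{(q-p)/q}$, hence $C_{S,\mathcal R}\le(pq\,d/(q-p))^{(p-q)/q}$.

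The second step is to show that this bound is attained at $u=\phi$. The Nehari identity $\mathfrak I(\phi)=0$ collapses the numerator of $J(\phi)$ to $\int_{\G}|\phi|^{q}\,dx$, so $J(\phi)=(\int_{\G}|\phi|^{q}\,dx)^{(q-p)/q}$. The Pohozaev-type identity obtained from $\partial_{\lambda}\mathfrak L(\lambda^{Q/p}\phi(\delta_{\lambda}\cdot))\big|_{\lambda=1}=0$, combined with Nehari — this is Lemma \ref{sharp_lem} specialised to $a_{1}=a$, $a_{2}=0$ — yields both $\int_{\G}|\phi|^{q}\,dx=pq\,d/(q-p)$ and $\int_{\G}|\phi|^{q}\,dx=\frac{apq}{apq-Q(q-p)}\int_{\G}|\phi|^{p}\,dx$. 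The first identity matches $J(\phi)$ with the lower bound from Step 1, proving the second equality in \eqref{Sobolev_sharp2}; the second identity then produces the first equality.

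No genuine obstacle is expected: the whole argument is a one-parameter simplification of the proof of Theorem \ref{sharp}, made transparent by the observation that on the Nehari set $\mathfrak L$ is proportional to $\|\cdot\|_{L^{q}}^{q}$, so the sharp lower bound on $J$ flows directly from the definition of $d$ without any need for the two-parameter rescaling used to compute the Gagliardo-Nirenberg constant.
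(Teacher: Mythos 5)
Your proof is correct, and it reaches the same two identities the paper uses ($\int_{\G}|\phi|^{q}dx=\frac{pq}{q-p}d$ and $\int_{\G}|\phi|^{q}dx=\frac{apq}{apq-Q(q-p)}\int_{\G}|\phi|^{p}dx$, both from Lemma \ref{sharp_lem} with $a_{1}=a$, $a_{2}=0$) to evaluate $J(\phi)$ and obtain the upper bound on $C_{S,\R}$. The difference is in the sharp lower bound for $J(u)$ over arbitrary $u\neq 0$: the paper normalises $\widetilde{u}=\|\phi\|_{L^{q}}\|u\|_{L^{q}}^{-1}u$ onto the $L^{q}$-sphere of $\phi$ and invokes Lemma \ref{sharp_lem2} (the statement that $\phi$ minimises $\|\cdot\|_{L^{p}_{a,0}}^{p}$ subject to fixed $\int|\cdot|^{q}$), whereas you project $u$ onto the Nehari manifold via the explicit $\mu_{u}$ of Lemma \ref{LM: 2.1} and use the definition of $d$ directly, obtaining $J(u)\geq\bigl(\frac{pq}{q-p}d\bigr)^{(q-p)/q}$ in one line. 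Since the paper's Lemma \ref{sharp_lem2} is itself proved by exactly this Nehari-projection computation, your argument in effect inlines that lemma and shortcuts one layer of the paper's structure; what you lose is the reusable constrained-minimisation statement, what you gain is a more self-contained and slightly shorter derivation. All the auxiliary facts you rely on (well-definedness of $\mu_{u}$ for $u\in L^{p}_{a}(\G)\setminus\{0\}$, which needs $u\in L^{q}$ via the Sobolev embedding, and the reduction of $\mathfrak L$ to $(\frac1p-\frac1q)\int|\cdot|^{q}$ on $\mathcal N$) are available and correctly invoked.
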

\begin{proof}[Proof of Theorem \ref{Sobolev}] Using Lemma \ref{sharp_lem} for $a_{1}=a$, $a_{2}=0$, $\R_{1}=\R$ and $\I(\phi)=0$ one calculates
$$\frac{\int_{\G}(|\mathcal{R}^{\frac{a}{\nu}}\phi(x)|^{p}+|\phi(x)|^{p})dx}
{\left(\int_{\G}|\phi(x)|^{q}dx\right)^{\frac{p}{q}}}=\left(\int_{\G}|\phi(x)|^{q}\right)^{\frac{q-p}{q}}=
\left(\frac{apq}{apq-Q(q-p)}\int_{\G}|\phi(x)|^{p}dx\right)^{\frac{q-p}{q}}.$$
It follows that
\begin{equation}\label{Sobolev_sharp3}
C_{S, \R}^{-1}\leq \left(\frac{apq}{apq-Q(q-p)}\int_{\G}|\phi(x)|^{p}dx\right)^{\frac{q-p}{q}}.
\end{equation}
For all $u\in L^{p}_{a}(\mathbb{G})\backslash\{0\}$, we denote $\widetilde{u}(x):=\|\phi\|_{L^{q}(\mathbb{G})}\|u\|^{-1}_{L^{q}(\G)}u(x)$ so that
$$\int_{\G}|\widetilde{u}(x)|^{q}dx=\int_{\G}|\phi(x)|^{q}dx.$$
Then, by Lemma \ref{sharp_lem2} for $a_{1}=a$, $a_{2}=0$ and $\R_{1}=\R$ we obtain that
\begin{multline*}
\int_{\G}(|\mathcal{R}^{\frac{a}{\nu}}\widetilde{u}(x)|^{p}+|\widetilde{u}(x)|^{p})dx\geq \int_{\G}(|\mathcal{R}^{\frac{a}{\nu}}\phi(x)|^{p}+|\phi(x)|^{p})dx
\\
=\frac{apq}{apq-Q(q-p)}\int_{\G}|\phi(x)|^{p}dx,
\end{multline*}
the last equality holds in view of \eqref{sharp_lem1} for $a_{1}=a$, $a_{2}=0$ and $\R_{1}=\R$.
In the case $a_{1}=a$ and $a_{2}=0$, by \eqref{sharp_lem2} we obtain
$$\frac{\int_{\G}(|\mathcal{R}^{\frac{a}{\nu}}u(x)|^{p}+|u(x)|^{p})dx}
{\left(\int_{\G}|u(x)|^{q}dx\right)^{\frac{p}{q}}}\geq \frac{apq}{apq-Q(q-p)}\int_{\G}|\phi(x)|^{p}dx \left(\int_{\G}|\phi(x)|^{q}dx\right)^{-\frac{p}{q}}$$
\begin{equation}\label{Sobolev_sharp4}
=\left(\frac{apq}{apq-Q(q-p)}\int_{\G}|\phi(x)|^{p}dx\right)^{1-\frac{p}{q}}.
\end{equation}
Thus, the estimates \eqref{Sobolev_sharp3} and \eqref{Sobolev_sharp4} imply the first equality in \eqref{Sobolev_sharp2}. Putting \eqref{sharp_d} in the first equality of \eqref{Sobolev_sharp2} for $a_{1}=a$ and $a_{2}=0$, we obtain the second equality in \eqref{Sobolev_sharp2}.
\end{proof}
\begin{rem}\label{Sobolev_rem}
From  Theorem \ref{sharp} and Theorem \ref{Sobolev}, we note that the best constants in the Sobolev and Gagliardo-Nirenberg (with $a_2=0$) inequalities are related by
$$C_{GN, \R, a, 0}^{\frac{p}{q}}=C_{S, \R}\frac{apq}{apq-Q(q-p)}\left(\frac{Q(q-p)}{apq-Q(q-p)}\right)^{\frac{Q(p-q)}{apq}}.$$
It is interesting to note that while each best constant depends on the positive Rockland operator $\R$ used in the definition of the norm, the ratio $C_{GN, \R, a, 0}^{\frac{p}{q}}/C_{S, \R}$ is independent of $\R$.
\end{rem}

\section{Further extensions}
\label{SEC:ex_exist}

In this section, we prove a more general form of the Gagliardo-Nirenberg inequality on graded Lie groups involving more than two norms, and using this one can prove the extension of the Theorem \ref{thm1} on ground state solutions in almost exactly the same way as in the proof of that theorem. Therefore, we will show this only briefly indicating the main changes in the proofs.

Here we use the space
$L^{p}_{a_{1},...,a_{\ell}}(\mathbb{G})=L^{p}_{a_{1},...,a_{\ell},\R_{1},...,\R_{\ell}}(\mathbb{G})$, for $a_{1}>a_{2}>...>a_{\ell}\geq0$, defined by the norm
\begin{equation}\label{norm_0}
\|u\|_{L^{p}_{a_{1},...,a_{\ell}, \R_{1},...,\R_{\ell}}(\mathbb{G})}:=
\left(\int_{\mathbb{G}}(\sum_{j=1}^{\ell}|\mathcal{R}_{j}^{\frac{a_{j}}{\nu_{j}}}u(x)|^{p})dx\right)^{1/p}.
\end{equation}
Note that $L^{p}_{a_{1},0,...,0}(\G)=L^{p}_{a_{1}}(\G)$ and $L^{p}_{a_{1},a_{2},0,...,0}(\G)=L^{p}_{a_{1},a_{2}}(\G)$. As in Remark \ref{rem_ind}, we obtain the independence of the spaces $L^{p}_{a_{1},...,a_{\ell}}(\G)$ from the choice of the Rockland operators $\{\R_{j}\}_{j=1}^{\ell}$, also in view of the equality
$$
L^{p}_{a_{1},...,a_{\ell}}(\G)=\bigcap_{j=1}^\ell \dot{L}^p_{a_j}(\G).
$$
Then we have the following multi-parameter Gagliardo-Nirenberg type inequality extending Theorem \ref{THM-GN}:
\begin{thm}\label{THM-GN_ex}
Let $\mathbb{G}$ be a graded Lie group of homogeneous dimension $Q$ and let $\{\mathcal{R}_{j}\}_{j=1}^{\ell}$ be positive Rockland operators of homogeneous degrees $\{\nu_{j}\}_{j=1}^{\ell}$, respectively, with any $\ell\geq 2$. Assume that
\begin{equation}\label{con}
a_{1}>a_{2}>...>a_{\ell}\geq0,\; 1<p<\frac{Q}{a_{1}},\; \textrm{ and }  \;Q\left(\frac{1}{p}-\frac{1}{p_{j}}\right)=a_{j},\; j=1\ldots,\ell,
\end{equation}
and that for $s_{j}\in[0,1]$ and $1<q<\infty$ we have
\begin{equation}\label{con2}
\sum_{j=1}^{\ell}s_{j}=1\; \textrm{ and } \;
\sum_{j=1}^{\ell}\frac{s_{j}}{p_{j}}=\frac1q.
\end{equation}
Then there exists $C>0$ such that we have
\begin{equation}\label{GN1_ex}
\|u\|_{L^{q}(\G)}\leq C\prod_{j=1}^{\ell}\left\|u\right\|^{s_{j}}_{\dot{L}^{p}_{a_j}(\G)}
\simeq C\prod_{j=1}^{\ell}\left\|\R_{j}^{\frac{a_{j}}{\nu_{j}}}u\right\|^{s_{j}}_{L^{p}(\G)}
\end{equation}
for any $u\in \bigcap_{j=1}^\ell \dot{L}^p_{a_j}(\G)$.
\end{thm}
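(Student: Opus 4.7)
The plan is to follow the same two-step template as in the proof of Theorem \ref{THM-GN}: first promote each single-operator term to an $L^{p_j}$-norm via the Hardy--Littlewood--Sobolev-type embedding of Corollary \ref{cor_FR}, and then interpolate these $L^{p_j}$-norms down to the target $L^{q}$-norm by a generalised H\"older inequality. The assumptions are tailored exactly for these two steps: the scaling identity $Q(1/p-1/p_j)=a_j$ in \eqref{con} is precisely the homogeneity required for Corollary \ref{cor_FR} to give
$$\|u\|_{L^{p_j}(\G)}\lesssim \|\R_{j}^{a_j/\nu_j}u\|_{L^p(\G)},\qquad j=1,\ldots,\ell,$$
(with the case $a_j=0$, i.e.\ $p_j=p$, being trivial), while the conditions $\sum_j s_j=1$ and $\sum_j s_j/p_j=1/q$ in \eqref{con2} are the exact compatibility relations needed for a multi-exponent H\"older inequality against the family $\{L^{p_j}\}$.

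For the interpolation step I would write $|u|^q=\prod_{j=1}^\ell |u|^{qs_j}$ and apply H\"older's inequality with the conjugate exponents $r_j:=p_j/(qs_j)$. The reciprocals satisfy
$$\sum_{j=1}^\ell \frac{1}{r_j}=q\sum_{j=1}^\ell \frac{s_j}{p_j}=1$$
thanks to \eqref{con2}, which yields
$$\int_\G |u(x)|^q\,dx\leq \prod_{j=1}^\ell \left(\int_\G|u(x)|^{p_j}\,dx\right)^{qs_j/p_j},$$
and hence $\|u\|_{L^q(\G)}\leq \prod_j \|u\|_{L^{p_j}(\G)}^{s_j}$. Substituting the Sobolev bound from the previous paragraph into each factor then delivers \eqref{GN1_ex}. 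Indices $j$ with $s_j=0$ contribute a factor of $1$ and can be ignored, so no assumption on $\R_j^{a_j/\nu_j}u$ is needed for them beyond the hypothesis $u\in\bigcap_j \dot L^p_{a_j}(\G)$.

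No part of this argument is really an obstacle; the range of admissible $q$ (namely $p\leq q\leq p_1$) comes for free from the convex combination $1/q=\sum s_j/p_j$ with $1/p_1\leq 1/p_j\leq 1/p$, and the boundary cases $s_j=0$ and $a_\ell=0$ require only a brief remark. The only place where mild care is warranted is checking that every $p_j$ is admissible in Corollary \ref{cor_FR}, i.e.\ that $p_j=pQ/(Q-a_jp)\in(p,\infty)$ for all $a_j>0$; this is guaranteed by $1<p<Q/a_1$ together with $a_1\geq a_j$. Thus the multi-parameter Gagliardo--Nirenberg inequality reduces essentially to Corollary \ref{cor_FR} plus generalised H\"older, in exact parallel with the two-parameter argument of Theorem \ref{THM-GN}, and the subsequent extension of Theorem \ref{thm1} to the $\ell$-parameter setting can then be carried out by repeating the variational analysis of Section \ref{SEC:exist} verbatim.
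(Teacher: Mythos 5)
Your proposal is correct and follows essentially the same argument as the paper: the generalised H\"older inequality with exponents $p_j/(qs_j)$ (valid since $\sum_j qs_j/p_j=1$ by \eqref{con2}), followed by the Sobolev embedding of Corollary \ref{cor_FR} applied to each factor $\|u\|_{L^{p_j}(\G)}$ using the scaling condition in \eqref{con}. Your additional remarks on the degenerate cases $s_j=0$ and $a_\ell=0$ (where $p_j=p$ and the embedding is trivial) are sound and only make explicit what the paper leaves implicit.
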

\begin{proof}[Proof of Theorem \ref{THM-GN_ex}] Taking into account that $\sum_{j=1}^{\ell}s_{j}=1$, by a direct calculation one has
$$\int_{\G}|u(x)|^{q}dx=\int_{\G}|u(x)|^{q\left(\sum_{j=1}^{\ell}s_{j}\right)}dx
\leq \prod_{j=1}^{\ell}\left(\int_{\G}|u(x)|^{p_{j}}dx\right)^{\frac{qs_{j}}{p_{j}}},$$
where we have used the H\"{o}lder's inequality for $\sum_{j=1}^{\ell}\frac{qs_{j}}{p_{j}}=1$ in the last inequality.
Since $Q\left(\frac{1}{p}-\frac{1}{p_{j}}\right)=a_{j}$ for all $j=1,\ldots,\ell$, we use the Sobolev inequality \eqref{cor_ineq2} to obtain
$$\int_{\G}|u(x)|^{q}dx \leq C\prod_{j=1}^{\ell}\left(\int_{\G}|\R_{j}^{\frac{a_{j}}{\nu_{j}}}u(x)|^{p}dx\right)^{\frac{qs_{j}}{p}},$$
which implies \eqref{GN1_ex}.
\end{proof}
We now consider the following Schr\"{o}dinger equation with the power nonlinearity
\begin{equation}\label{nonlinear_ex}
\sum_{j=1}^{\ell}\mathcal{R}_{j}^{\frac{a_{j}}{\nu_{j}}}
(|\mathcal{R}_{j}^{\frac{a_{j}}{\nu_{j}}}u|^{p-2}\mathcal{R}_{j}^{\frac{a_{j}}{\nu_{j}}}u)=|u|^{q-2}u, \quad u\in \bigcap_{j=1}^\ell \dot{L}^p_{a_j}(\G).
\end{equation}
As an example in $\Rn$, for $p=2$ and $\R_j=(-1)^{m_j}\sum_{k=1}^n a_{jk}\frac{\partial^{2m_j}}{\partial x_k^{2m_j}}$, $a_{jk}>0$, $m_j\in\mathbb N_0$, we can consider the higher order partial differential equation with lower order terms:
\begin{equation}\label{EQ:hoex}
\sum_{j=1}^\ell \left( (-1)^{m_j}\sum_{k=1}^n a_{jk}\frac{\partial^{2m_j} u}{\partial x_k^{2m_j}}\right)=|u|^{q-2}u,
\end{equation}
or similar equations e.g. on the Heisenberg, or on more general stratified and graded Lie groups.
Now let us formulate some notations and definitions.

\begin{defn}
A function $u\in L^{p}_{a_{1},...,a_{\ell}}(\mathbb{G})$ is said to be a solution of \eqref{nonlinear_ex} if and only if for all $\psi\in L^{p}_{a_{1},...,a_{\ell}}(\mathbb{G})$ the identity
\begin{equation}\label{solution_ex}
\int_{\mathbb{G}}\left(\sum_{j=1}^{\ell}|\mathcal{R}_{j}^{\frac{a_{j}}{\nu_{j}}}
u(x)|^{p-2}\mathcal{R}_{j}^{\frac{a_{j}}{\nu_{j}}}
u(x)\overline{\mathcal{R}_{j}^{\frac{a_{j}}{\nu_{j}}}\psi(x)}
-|u(x)|^{q-2}u(x)\overline{\psi(x)}\right)dx=0
\end{equation}
holds.
\end{defn}

By $\mathfrak{L}_{1}:L^{p}_{a_{1},...,a_{\ell}}(\mathbb{G})\to \mathbb R$ and $\mathfrak{I}_{1}:L^{p}_{a_{1},...,a_{\ell}}(\mathbb{G})\to \mathbb R$ we define the following functionals acting on $L^{p}_{a_{1},...,a_{\ell}}(\mathbb{G})\cap L^{q}(\G)$:
\begin{equation}\label{L_ex}
\mathfrak{L}_{1}(u):=\frac{1}{p}\sum_{j=1}^{\ell}\int\limits_{\mathbb{G}}|\mathcal{R}_{j}^{\frac{a_{j}}{\nu_{j}}}u(x)|^{p}dx
-\frac{1}{q}\int\limits_{\mathbb{G}}|u(x)|^{q}dx
\end{equation}
and
\begin{equation}\label{I_ex}
\mathfrak{I}_{1}(u):=\int\limits_{\mathbb{G}}\left(\sum_{j=1}^{\ell}|\mathcal{R}_{j}^{\frac{a_{j}}{\nu_{j}}}u(x)|^{p}
-|u(x)|^{q}\right)dx.
\end{equation}

The Nehari set and $d_{1}$ are defined as
\begin{equation}\label{N_ex}
\mathcal{N}_{1}:=\{u\in L^{p}_{a_{1},...,a_{\ell}}(\mathbb{G})\ \backslash\{0\}: \I_{1}(u)=0\}
\end{equation}
and
\begin{equation}\label{d_ex}
d_{1}:=\inf\{\L_{1}(u):u\in\mathcal{N}_{1}\},
\end{equation}
respectively.
Now we state the extension of  Theorem \ref{thm1}.
\begin{thm}\label{thm1_ex}
Let conditions \eqref{con} and \eqref{con2} hold. Then the equation \eqref{nonlinear_ex} has a least energy solution $\phi\in \bigcap_{j=1}^\ell \dot{L}^p_{a_j}(\G).$

Moreover, we have $d_1=\L_1(\phi)$.
\end{thm}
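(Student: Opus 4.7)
The plan is to carry out the entire argument of Theorem \ref{thm1} verbatim with the obvious bookkeeping changes: the two norms $\|\R_1^{a_1/\nu_1}u\|_{L^p}$ and $\|\R_2^{a_2/\nu_2}u\|_{L^p}$ get replaced by the $\ell$-tuple $\{\|\R_j^{a_j/\nu_j}u\|_{L^p}\}_{j=1}^\ell$, and every use of Theorem \ref{THM-GN} is replaced by a use of Theorem \ref{THM-GN_ex}. Because the functional $\L_1$ is still the sum over $j$ of $\frac1p\|\R_j^{a_j/\nu_j}u\|_{L^p}^p$ minus $\frac1q\|u\|_{L^q}^q$, all identities used in the two-operator case remain additive in $j$, so no structural change is needed.

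First I would establish the four preparatory lemmas exactly as in Section \ref{SEC:exist}. The Nehari scaling $\mu_u=\|u\|_{L^p_{a_1,\dots,a_\ell}}^{p/(q-p)}\|u\|_{L^q}^{-q/(q-p)}$ still puts $\mu_u u$ into $\mathcal N_1$, and $\mu_u<1$ when $\I_1(u)<0$. The lower bound $\inf_{\mathcal N_1}\|u\|_{L^p_{a_1,\dots,a_\ell}}>0$ follows from \eqref{GN1_ex}: for $u\in\mathcal N_1$ one has
\begin{equation*}
\|u\|_{L^p_{a_1,\dots,a_\ell}}^p=\|u\|_{L^q}^q\leq C\prod_{j=1}^\ell\|\R_j^{a_j/\nu_j}u\|_{L^p}^{qs_j}\leq C\|u\|_{L^p_{a_1,\dots,a_\ell}}^{q\sum_j s_j}=C\|u\|_{L^p_{a_1,\dots,a_\ell}}^q,
\end{equation*}
using $\sum_j s_j=1$. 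The compact embedding $L^p_{a_1,\dots,a_\ell}(D)\hookrightarrow L^q(D)$ on bounded $D$ is proved identically to Lemma \ref{LM: 2.3}, interpolating the $L^q$ norm of $u_k-u$ via \eqref{GN1_ex} between a factor $\|u_k-u\|_{L^p_{a_1,\dots,a_\ell}}^{ps_j}$ (bounded on the sequence) and tail factors going to zero. The Lagrange multiplier computation proceeds unchanged: $\langle\I_1'(v),v\rangle_\G=(p-q)\int|v|^q\,dx<0$ on $\mathcal N_1$, forcing the multiplier $\theta=0$ and hence $\L_1'(v)=0$ whenever $v\in\mathcal N_1$ attains $d_1$.

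Next I would reproduce the variational argument. Starting from a minimising sequence provided by Ekeland's principle, boundedness in $L^p_{a_1,\dots,a_\ell}$ follows from Lemma \ref{LM: 2.2} together with $\L_1(u_k)\to d_1$, and $\limsup\int|u_k|^q\,dx\geq C_3>0$ as in \eqref{EQ: (2.1)}. The concentration-compactness argument of \cite[Lemma 3.1]{ST02} applied via Lemma \ref{LM: 2.3} (now for the multi-space) yields points $\tilde x^k\in\G$ with nonvanishing local mass; left-invariance of the Haar measure and of each $\R_j$ lets us define $\omega_k(x):=u_k(x\tilde x^k)$ with $\L_1(\omega_k)=\L_1(u_k)$ and $\I_1(\omega_k)=\I_1(u_k)$, and $\omega_k\rightharpoonup\phi\neq 0$ weakly in $L^p_{a_1,\dots,a_\ell}$.

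The main work is then to show $\I_1(\phi)=0$ and strong convergence $\omega_k\to\phi$; this is exactly where the structure carries over without extra ideas, because both $\L_1$ and $\I_1$ are sums over $j$ of terms of the two types already handled. Ruling out $\I_1(\phi)<0$ uses Fatou together with the scaling $\mu_\phi<1$ from Lemma \ref{LM: 2.1}; ruling out $\I_1(\phi)>0$ uses the Brezis-Lieb splitting $\I_1(\omega_k)=\I_1(\phi)+\I_1(\omega_k-\phi)+o(1)$, which applies termwise in $j$ via Lemma \ref{BrL_lem}, combined with the auxiliary scaling $\mu_{\psi_k}\in(0,1)$. Strong convergence in $L^p_{a_1,\dots,a_\ell}$ then follows from the same two-case dichotomy used in the proof of Theorem \ref{thm1}. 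Finally Lemma \ref{LM: 2.4} (in its obvious $\ell$-operator version) identifies $\phi$ as a least energy solution with $d_1=\L_1(\phi)$. The only point requiring genuine care, and the one I would write out most carefully, is the concentration step: verifying that the Brezis-Lieb-type decomposition of $\I_1$ holds simultaneously for all $\ell$ Rockland terms and for the $L^q$ nonlinearity, since this is what allows the two-case contradiction argument to close.
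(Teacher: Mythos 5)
Your proposal is correct and follows essentially the same route as the paper, which itself only states that the proof of Theorem \ref{thm1_ex} is obtained by repeating the argument of Theorem \ref{thm1} with Theorem \ref{THM-GN_ex} in place of Theorem \ref{THM-GN} and with the two Rockland terms replaced by the sum over $j=1,\dots,\ell$. The adaptations you describe (Nehari scaling, the lower bound on $\mathcal N_1$ via \eqref{GN1_ex} using $\sum_j s_j=1$, termwise Brezis--Lieb splitting of $\I_1$, and the two-case strong-convergence dichotomy) are exactly the intended bookkeeping changes.
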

The proof is similar to that of Theorem \ref{thm1} so we may omit the details.

We also note that there is a link between the best constant in the multi-parameter Gagliardo-Nirenberg inequality \eqref{GN1_ex}, the least energy solution $\phi$ from Theorem \ref{thm1_ex}, and the minimiser $d_1$ from \eqref{d_ex}. This can be derived similarly to the proof of Theorem \ref{sharp}, based on relations for Sobolev norms of $\phi$ similar to those in Lemma \ref{sharp_lem}. However, for more than two operators such relations are more lengthy, therefore we omit such a calculation here.


\begin{thebibliography}{BFKG12}

\bibitem[Aub76]{Aubin76}
T.~Aubin.
\newblock Probl\`emes isop\'erim\'etriques et espaces de {S}obolev.
\newblock {\em J. Differential Geometry}, 11(4):573--598, 1976.

\bibitem[Bea77]{Beals-Rockland}
R.~Beals.
\newblock Op\'erateurs invariants hypoelliptiques sur un groupe de {L}ie
  nilpotent.
\newblock {\em S\'eminaire {G}oulaouic-{S}chwartz 1976/1977: \'Equations aux
  d\'eriv\'ees partielles et analyse fonctionnelle}, {E}xp. {N}o. 19:8pp, 1977.

\bibitem[BFKG12]{BFKG-graded}
H.~Bahouri, C.~Fermanian-Kammerer, and I.~Gallagher.
\newblock Refined inequalities on graded {L}ie groups.
\newblock {\em C. R. Math. Acad. Sci. Paris}, 350(7-8):393--397, 2012.

\bibitem[BL83]{BL83}
H.~Br\'ezis and E.~H. Lieb.
\newblock A relation between pointwise convergence of functions and convergence
  of functionals.
\newblock {\em Proc. Amer. Math. Soc.}, 88(3):486--490, 1983.

\bibitem[CR13a]{CR13}
J.~Chen and E.~M. Rocha.
\newblock A class of sub-elliptic equations on the {H}eisenberg group and
  related interpolation inequalities.
\newblock In {\em Advances in harmonic analysis and operator theory}, volume
  229 of {\em Oper. Theory Adv. Appl.}, pages 123--137. Birkh\"auser/Springer
  Basel AG, Basel, 2013.

\bibitem[CR13b]{CR-Opuscula}
J.~Chen and E.~M. Rocha.
\newblock Existence of solution of sub-elliptic equations on the {H}eisenberg
  group with critical growth and double singularities.
\newblock {\em Opuscula Math.}, 33(2):237--254, 2013.

\bibitem[CR17]{CR-CRAS}
D.~Cardona and M.~Ruzhansky.
\newblock Multipliers for {B}esov spaces on graded {L}ie groups.
\newblock {\em C. R. Acad. Sci. Paris,
  http://dx.doi.org/10.1016/j.crma.2017.02.015}, 2017.

\bibitem[Fol75]{F75}
G.~B. Folland.
\newblock Subelliptic estimates and function spaces on nilpotent {L}ie groups.
\newblock {\em Ark. Mat.}, 13(2):161--207, 1975.

\bibitem[FR13]{FR:Sobolev}
V.~Fischer and M.~Ruzhansky.
\newblock {S}obolev spaces on graded groups.
\newblock {\em to appear in Ann. Inst. Fourier,
  https://arxiv.org/abs/1311.0192}, 2013.

\bibitem[FR14]{FR:graded}
V.~Fischer and M.~Ruzhansky.
\newblock A pseudo-differential calculus on graded nilpotent {L}ie groups.
\newblock In {\em Fourier analysis}, Trends Math., pages 107--132.
  Birkh\"auser/Springer, Cham, 2014.

\bibitem[FR16]{FR16}
V.~Fischer and M.~Ruzhansky.
\newblock {\em Quantization on nilpotent {L}ie groups}, volume 314 of {\em
  Progress in Mathematics}.
\newblock Birkh\"auser/Springer, [Open access book], 2016.

\bibitem[FS82]{FS-book}
G.~B. Folland and E.~M. Stein.
\newblock {\em Hardy spaces on homogeneous groups}, volume~28 of {\em
  Mathematical Notes}.
\newblock Princeton University Press, Princeton, N.J.; University of Tokyo
  Press, Tokyo, 1982.

\bibitem[Gag59]{G59}
E.~Gagliardo.
\newblock Ulteriori propriet\`a di alcune classi di funzioni in pi\`u
  variabili.
\newblock {\em Ricerche Mat.}, 8:24--51, 1959.

\bibitem[HN79]{HN-79}
B.~Helffer and J.~Nourrigat.
\newblock Caracterisation des op\'erateurs hypoelliptiques homog\`enes
  invariants \`a gauche sur un groupe de {L}ie nilpotent gradu\'e.
\newblock {\em Comm. Partial Differential Equations}, 4(8):899--958, 1979.

\bibitem[HV95]{HV95}
E.~Hebey and M.~Vaugon.
\newblock The best constant problem in the {S}obolev embedding theorem for
  complete {R}iemannian manifolds.
\newblock {\em Duke Math. J.}, 79(1):235--279, 1995.

\bibitem[LL01]{LL}
E.~H. Lieb and M.~Loss.
\newblock {\em Analysis}, volume~14 of {\em Graduate Studies in Mathematics}.
\newblock American Mathematical Society, Providence, RI, second edition, 2001.

\bibitem[Mil80]{Miller:80}
K.~G. Miller.
\newblock Parametrices for hypoelliptic operators on step two nilpotent {L}ie
  groups.
\newblock {\em Comm. Partial Differential Equations}, 5(11):1153--1184, 1980.

\bibitem[MR04]{MR04}
F.~Merle and P.~Raphael.
\newblock On universality of blow-up profile for {$L^2$} critical nonlinear
  {S}chr\"odinger equation.
\newblock {\em Invent. Math.}, 156(3):565--672, 2004.

\bibitem[MR05]{MR05}
F.~Merle and P.~Raphael.
\newblock Profiles and quantization of the blow up mass for critical nonlinear
  {S}chr\"odinger equation.
\newblock {\em Comm. Math. Phys.}, 253(3):675--704, 2005.

\bibitem[MS13]{MS13}
G.~Mancini and K.~Sandeep.
\newblock Extremals for {S}obolev and exponential inequalities in hyperbolic
  space.
\newblock In {\em Concentration Analysis and Applications to PDE}, volume 229
  of {\em Trends in Mathematics}, pages 49--67. Birkh\"auser/Springer Basel AG,
  Basel, 2013.

\bibitem[Nir59]{N59}
L.~Nirenberg.
\newblock On elliptic partial differential equations.
\newblock {\em Ann. Scuola Norm. Sup. Pisa (3)}, 13:115--162, 1959.

\bibitem[Roc78]{Rockland}
C.~Rockland.
\newblock Hypoellipticity on the {H}eisenberg group-representation-theoretic
  criteria.
\newblock {\em Trans. Amer. Math. Soc.}, 240:1--52, 1978.

\bibitem[RS76]{Rothschild-Stein:AM-1976}
L.~P. Rothschild and E.~M. Stein.
\newblock Hypoelliptic differential operators and nilpotent groups.
\newblock {\em Acta Math.}, 137(3-4):247--320, 1976.

\bibitem[RS16]{RS-PAMS}
M.~Ruzhansky and D.~Suragan.
\newblock On {K}ac's principle of not feeling the boundary for the {K}ohn
  {L}aplacian on the {H}eisenberg group.
\newblock {\em Proc. Amer. Math. Soc.}, 144(2):709--721, 2016.

\bibitem[RS17]{RS-AM}
M.~Ruzhansky and D.~Suragan.
\newblock Layer potentials, {K}ac's problem, and refined {H}ardy inequality on
  homogeneous {C}arnot groups.
\newblock {\em Adv. Math.}, 308:483--528, 2017.

\bibitem[RT16]{RT-IMRN}
M.~Ruzhansky and N.~Tokmagambetov.
\newblock Nonharmonic analysis of boundary value problems.
\newblock {\em Int. Math. Res. Not. IMRN}, (12):3548--3615, 2016.

\bibitem[RT17]{RT-Heisenberg}
M.~Ruzhansky and N.~Tokmagambetov.
\newblock Nonlinear damped wave equations for the sub-{L}aplacian on the
  {H}eisenberg group and for {R}ockland operators on graded {L}ie groups.
\newblock {\em https://arxiv.org/abs/1703.07902}, 2017.

\bibitem[ST02]{ST02}
I.~Schindler and K.~Tintarev.
\newblock An abstract version of the concentration compactness principle.
\newblock {\em Rev. Mat. Complut.}, 15(2):417--436, 2002.

\bibitem[Tal76]{Talenti76}
G.~Talenti.
\newblock Best constant in {S}obolev inequality.
\newblock {\em Ann. Mat. Pura Appl. (4)}, 110:353--372, 1976.

\bibitem[tER97]{tER:97}
A.~F.~M. ter Elst and D.~W. Robinson.
\newblock Spectral estimates for positive {R}ockland operators.
\newblock In {\em Algebraic groups and {L}ie groups}, volume~9 of {\em Austral.
  Math. Soc. Lect. Ser.}, pages 195--213. Cambridge Univ. Press, Cambridge,
  1997.

\bibitem[Wei83]{W83}
M.~I. Weinstein.
\newblock Nonlinear {S}chr\"odinger equations and sharp interpolation
  estimates.
\newblock {\em Comm. Math. Phys.}, 87(4):567--576, 1982/83.

\end{thebibliography}

\end{document}